\newtheorem{theorem}{Theorem}[section]
\newtheorem{proposition}[theorem]{Proposition}
\newtheorem{lemma}[theorem]{Lemma}
\newtheorem{result}[theorem]{Result}
\newtheorem{openproblem}[theorem]{Open Problem}
\newtheorem{rem}[theorem]{Remark}
\def\cC{\mathcal C}
\def\cQ{\mathcal Q}
\def\cX{\mathcal X}
\def\cY{\mathcal Y}
\def\K{\mathbb{K}}
\def\ord{\mbox{\rm ord}}
\newcommand{\aut}{\mbox{\rm Aut}}
\title{On algebraic curves with many automorphisms in characteristic $p$}
\date{}
\author{Maria Montanucci\thanks{Department of Applied Mathematics and Computer Science, Technical University of Denmark, Kongens Lyngby 2800, Denmark,  {\em email: } marimo@dtu.dk}}
\begin{document}
\maketitle

\begin{center}
\emph{Dedicated to the memory of Elisa Montanucci.}
\end{center}

\begin{abstract}
 Let $\cX$ be an irreducible, non-singular, algebraic curve defined over a field of odd characteristic $p$. Let $g$  and $\gamma$ be the genus and $p$-rank of $\cX$, respectively. The influence of $g$ and $\gamma$ on the automorphism group $Aut(\cX)$ of $\cX$  is well-known in the literature. If $g \geq 2$ then $Aut(\cX)$ is a finite group, and unless $\cX$ is the so-called Hermitian curve, its order is upper bounded by a polynomial in $g$ of degree four (Stichtenoth). In 1978 Henn proposed a refinement of Stichtenoth's bound of degree $3$ in $g$ up to few exceptions, all having $p$-rank zero. In this paper a further refinement of Henn's result is proposed. First, we prove that if an algebraic curve of genus $g \geq 2$ has more than $336g^2$ automorphisms then its automorphism group has exactly two short orbits, one tame and one non-tame, that is, the action of the group is completely known. Finally when $|Aut(\cX)| \geq 900g^2$ sufficient conditions for $\mathcal{X}$ to have $p$-rank zero are provided.
\end{abstract}

\begin{small}

{\bf Keywords:} Algebraic curve, automorphism group, $p$-rank, genus

{\bf 2000 MSC:} 11G20, 20B25

\end{small}

\section{Introduction}

Let $\cX$ be a projective, geometrically irreducible, non-singular algebraic curve defined over an algebraically closed field $\K$ of positive characteristic $p$. Let $\K(\cX)$ be the field of rational functions on $\cX$ (i.e. the function field of $\cX$ over $\K$).
The $\K$-automorphism group $Aut(\cX)$ of $\cX$ is defined as the automorphism group of $\K(\cX)$ fixing $\K$ element-wise. The group $Aut(\cX)$ has a faithful action on the set of points of $\cX$.

By a classical result, $Aut(\cX)$ is finite whenever the genus $g$ of $\cX$ is at least two; see \cite{S} and \cite[Chapter 11; 22--24,31,33]{HKT}.
Furthermore it is known that every finite group occurs in this way, since, for any ground field $\K$ and any finite group $G$, there exists an algebraic curve $\cX$ defined over $\K$ such that $Aut(\cX)\cong G$; see \cite{27,28,38}.

This result raised a general problem for groups and curves, namely, that of determining the finite groups that can be realized as the $\K$-automorphism group of some curve with a given invariant. The most important such invariant is the \emph{genus} $g$ of the curve. In positive characteristic, another important invariant is the so-called $p$-rank of the curve, which is the integer $0 \leq \gamma \leq g$ such that the Jacobian of $\cX$ has $p^\gamma$ points of order $p$. 

Several results on the interaction between the automorphism group, the genus and the $p$-rank of a curve can be found in the literature. 
A remarkable example is the work of Nakajima \cite{Nak} who showed that the value of the $p$-rank deeply influences the order of a $p$-group of automorphisms of $\cX$. He also showed that curves for which the $p$-rank is the largest possible, namely $\gamma=g$, have at most $84(g^2-g)$ automorphisms.

In \cite{H} Hurwitz showed that if $\cX$ is defined over $\mathbb{C}$ then $|Aut(\cX)|\leq 84(g-1)$, which is known as \emph{Hurwitz bound}. This bound is sharp, i.e., there exist algebraic curves over $\mathbb{C}$ of arbitrarily high genus whose automorphism group has order exactly $84(g-1)$. Well-known examples are the Klein quartic and the Fricke-Macbeath curve, see \cite{M}. 
Roquette \cite{Ro} showed that Hurwitz bound also holds in positive characteristic $p$, if $p$ does not divide $|Aut(\cX)|$. 

A general bound in positive characteristic is $|Aut(\cX)| \leq 16g^4$ with one exception: the so-called Hermitian curve. This result is due to Stichtenoth \cite{stichtenoth1973I,stichtenoth1973II}.

The quartic bound $|Aut(\cX)| \leq 16g^4$ was improved by Henn in \cite{henn1978}. Henn's result shows that if $|Aut(\cX)| >8g^3$ then $\cX$ is $\K$-isomorphic to one of the following four curves:

\begin{itemize}
\item the non-singular model of the plane curve $Y^2 + Y = X^{2k+1}$, with $k > 1$ and $p=2$; 
\item the non-singular model of $Y^2 = X^n -X$, where $n = p^h$, $h > 0$ and $p>2$;
\item the Hermitian curve $\mathcal{H}_q: Y^{q+1}=X^q+X$ where $q = p^h$ and $h > 0$;
\item the non-singular model of the Suzuki curve $\mathcal{S}_q: X^{q_0}(X^q + X) = Y^q + Y$, where $q_0 = 2^r$, $r \geq 1$ and $q = 2q_0^2$. 
\end{itemize}

All the above exceptions have $p$-rank zero. This observation raised the following problem.

\begin{openproblem} \label{op}
Is it possible to find a (optimal) function $f(g)$ such that the existence of an automorphism group $G$ of $\cX$ with $|G|\geq f(g)$ implies $\gamma=0$?
\end{openproblem}

Clearly from Henn's result $f(g) \leq 8g^3$. Also $f(g)$ cannot be asymptotically of order less than $g^{3/2}$ as algebraic curves of positive $p$-rank with approximately $g^{3/2}$ automorphisms are known; see for example \cite{KMS}.

Open Problem \ref{op} was already studied in \cite{GK} where a positive answer is given under the additional hypothesis that $g$ is even or that the automorphism group $G$ is solvable.

\begin{theorem}  \label{even} \rm{\cite[Theorem 1.1 and Theorem 1.2]{GK}}
Let $\K$ be an algebraically closed field of odd characteristic $p$ and let $\cX$ be an algebraic curve defined over $\K$.
If $\cX$ has even genus $g \geq 2$ and at least $900g^2$ automorphisms then its $p$-rank $\gamma$ is equal to zero.
If $\cX$ is of arbitrary genus $g \geq 2$ and it has a solvable automorphism group of order at least $84pg^2/(p-2)$ then the $p$-rank of $\cX$ is zero.
\end{theorem}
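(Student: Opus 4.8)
The plan is to argue by contradiction: assuming $\gamma>0$, I would use the largeness of $G:=\Aut(\cX)$ (respectively of the given solvable group in the second part) to produce a $p$-subgroup so large that the Deuring--Shafarevich formula forces $\gamma=0$. First I would dispose of small genus. If $g\le 112$ then $900g^2>8g^3$, so in the first part Henn's theorem applies and $\cX$ is one of the four listed curves, all of $p$-rank $0$; a similar (smaller) initial range is covered by Henn in the solvable case, since $84pg^2/(p-2)>8g^3$ for small $g$. Hence I may assume $g$ large, which is the genuinely new range where Henn's cubic bound is silent.

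Next I would study the quotient $\cX\to\cX/G$ via the Riemann--Hurwitz formula $2g-2=|G|(2\bar g-2)+\sum_P d_P$, where $\bar g$ is the genus of $\cX/G$ and $d_P=\sum_{i\ge 0}(|G_P^{(i)}|-1)$ is the different exponent. Combining the classical Stichtenoth-type bounds on the orders of the stabilizer $G_P$ and of its first ramification group $G_P^{(1)}$ in terms of $g$ with the hypothesis $|G|\ge cg^2$, I would show that $\bar g=0$ and that $G$ has very few short orbits, the target configuration being exactly two: one tame, whose point stabilizer is cyclic of order prime to $p$, and one non-tame. Writing $P$ for a point of the non-tame orbit and $S:=G_P^{(1)}$ for the normal Sylow $p$-subgroup of $G_P$, I would then pass to the $p$-group cover $\cX\to\cX/S$.

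The decisive step is the Deuring--Shafarevich formula $\gamma-1=|S|(\gamma_{\cX/S}-1)+\sum_Q(|S|-m_Q)$, the $p$-rank analogue of Riemann--Hurwitz, where $m_Q$ counts the points of $\cX$ lying over a branch point $Q$. If I can show that $\cX/S$ is rational and that $S$ fixes a \emph{unique} point $P_0$, at which the cover is totally ramified while being unramified elsewhere, then there is a single branch point with $m_{Q}=1$ and, since a rational base has $p$-rank $0$, the formula collapses to $\gamma-1=-|S|+(|S|-1)=-1$, i.e.\ $\gamma=0$, contradicting $\gamma>0$. Thus everything reduces to controlling the fixed locus of $S$ and the birational type of $\cX/S$.

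The main obstacle, and the place where the two extra hypotheses enter, is precisely this last control: ruling out that $S$ has several fixed points, or that $\cX/S$ has positive genus, is exactly where a curve of positive $p$-rank could hide, so a bare numerical count does not settle it. In the even-genus case I would exploit a parity obstruction: writing $2g-2=|S|(2g_1-2)+\sum d_P$ with $g_1$ the genus of $\cX/S$, and using that for $p$ odd the order $|S|$ is odd while each $|S_P^{(i)}|-1$ is even, one finds that a configuration sustaining $\gamma>0$ would force $g$ to be odd, so the evenness of $g$ excludes it. In the solvable case I would instead descend along a chief series: a minimal normal subgroup of $G$ is elementary abelian, hence either a $p$-group, feeding directly into the Deuring--Shafarevich mechanism, or a prime-to-$p$ group over which I can quotient and induct, the factor $p/(p-2)$ in the hypothesis supplying exactly the slack needed to keep the stabilizer and second-ramification estimates valid through the induction. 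I expect the bookkeeping of the ramification filtration $G_P^{(0)}\supseteq G_P^{(1)}\supseteq G_P^{(2)}\supseteq\cdots$ under these successive quotients to be the most delicate part of the argument.
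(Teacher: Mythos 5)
You should first be aware that this theorem is not proved in the paper at all: it is imported verbatim from \cite{GK}, where it is the main result of a long and technical article, so your sketch has to stand on its own --- and it does not. The decisive gap is the parity argument you offer for the even-genus case, which is the \emph{only} place where the hypothesis ``$g$ even'' enters your proof. That argument is vacuous: for $p$ odd, $|S|$ and every $|S_P^{(i)}|$ are odd, so $|S|(2g_1-2)$ is even and each $|S_P^{(i)}|-1$ is even; hence every term of $2g-2=|S|(2g_1-2)+\sum_P d_P$ is even whether $g$ is even or odd, and the congruence carries no information whatsoever --- no configuration with $\gamma>0$ is excluded by it. The evenness of $g$ genuinely enters the proof in \cite{GK} through \emph{involutions}, not through $p$-groups: for $p$ odd, the Hurwitz genus formula applied to an element $\alpha$ of order $2$ with quotient genus $g'$ gives $2(g-2g'+1)$ fixed points, a number which is $\equiv 2 \pmod 4$ when $g$ is even; in particular no involution is fixed-point-free, and this launches a heavy group-theoretic analysis (structure of finite groups with cyclic, dihedral or generalized quaternion Sylow $2$-subgroups, Gorenstein--Walter-type classification) that pins down the structure of $G$. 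Your outline never examines $2$-subgroups at all, so it cannot recover this mechanism.

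Second, even granting the reduction to a single non-tame orbit, the step you yourself identify as the main obstacle --- proving that $\cX/S$ is rational and that $S$ fixes a \emph{unique} point, with no other ramification --- is essentially the whole content of the theorem: a curve of positive $p$-rank can perfectly well carry a $p$-group whose quotient is rational but which has \emph{two} short orbits (this is precisely Case 1 of Lemma \ref{terribile} quoted in this paper, and it is compatible with Deuring--Shafarevich, giving $\gamma \equiv 0 \pmod p$ rather than $\gamma=0$). Neither the failed parity remark nor the chief-series sketch supplies this step. For the solvable case your opening move (take a minimal normal subgroup, which is elementary abelian, split into the $p$-group and prime-to-$p$ cases, quotient and induct) is in the right spirit, but the claim that the factor $p/(p-2)$ ``supplies exactly the slack needed'' is asserted, not proved; the actual work consists in tracking short orbits and ramification data through the successive quotients and in showing that a large solvable group must fix a point, after which Nakajima's bound $|G_P^{(1)}|\le p(g-1)/(p-2)$ together with the tame bound yields the contradiction. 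As it stands, the proposal is an outline whose central step is missing in both halves.
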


In this paper we analyze large automorphism groups of curves of arbitrary genus $g \geq 2$ giving a partial answer to Open Problem \ref{op}. The following theorem summarizes our main results.

\begin{theorem} \label{maint}
Let $G$ be an automorphism group of an algebraic curve $\cX$ defined over a field of odd characteristic $p$. Denote with $g \geq 2$ and $\gamma$ the genus and the $p$-rank of $\cX$ respectively.
\begin{enumerate}
\item If $|G| > 24g^2$ then either $G$ has a unique (non-tame) short orbit or it has exactly two short orbits, one tame and one non-tame.
\item If $G$ has exactly one non-tame short orbit then $|G| \leq 336g^2$.
\item If $|G| \geq 60g^2$ and $G$ has exactly one non-tame short orbit then $\gamma$ is positive and congruent to zero modulo $p$.
\item If $|G| \geq 900g^2$ then $G$ has exactly one non-tame short orbit $O_1$ and one tame short orbit $O_2$.  If $\cX/G_P^{(1)}$ is rational for $P \in O_1$ and the stabilizer $G_{P,R}$ with $R \in O_1 \setminus \{P\}$ is either a $p$-group or a prime-to-p group then $\gamma$ is zero.

 \end{enumerate}
\end{theorem}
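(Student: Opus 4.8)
My plan is to deduce the orbit statement from parts (1)--(2), reduce the $p$-rank claim to a single fixed-point statement via the Deuring--Shafarevich formula on the quotient by $G_P^{(1)}$, and then handle that fixed-point statement as the genuinely hard step. For the orbit statement, since $900g^2>336g^2$, part (2) excludes the case of a unique non-tame short orbit, so by part (1) the group $G$ has exactly two short orbits, one non-tame $O_1$ and one tame $O_2$. A standard application of the Riemann--Hurwitz formula to $\cX\to\cX/G$ then forces $g(\cX/G)=0$: if $g(\cX/G)\ge 1$, then since $|G|\ge 900g^2$ and $G$ already has a short orbit, the contribution of the different makes the right-hand side exceed $2g-2$, a contradiction.

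Next I would set $S:=G_P^{(1)}$, the normal Sylow $p$-subgroup of $G_P$ for $P\in O_1$, and recall $G_P=S\rtimes C$ with $C$ cyclic of order prime to $p$; in particular $S$ is the unique Sylow $p$-subgroup of $G_P$. By hypothesis $\cX/S\cong\mathbb{P}^1$, so $\gamma(\cX/S)=0$. Applying the Deuring--Shafarevich formula to the $p$-group $S$ acting on $\cX$ gives
\[
\gamma(\cX)-1=|S|\bigl(\gamma(\cX/S)-1\bigr)+\sum_{R}\bigl(|S_R|-1\bigr)=-|S|+\sum_{R}\bigl(|S_R|-1\bigr),
\]
where $S_R=S\cap G_R$ and $R$ runs over the points with $S_R\neq 1$. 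Since $S_P=S$ contributes $|S|-1$, this collapses to
\[
\gamma(\cX)=\sum_{R\neq P}\bigl(|S_R|-1\bigr).
\]
Hence it suffices to prove that $P$ is the only point of $\cX$ fixed by a non-trivial element of $S$, equivalently that $S$ acts freely on $O_1\setminus\{P\}$.

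To set up the hard step I would first localise: any $R$ with $S_R\neq 1$ satisfies $p\mid|G_R|$, so $R$ is non-tamely ramified and therefore lies in the unique non-tame orbit $O_1$. Suppose, for contradiction, that some $R\in O_1\setminus\{P\}$ has $S_R\neq 1$. A non-trivial element of $S_R$ is a $p$-element of $G_{P,R}$, so by the standing hypothesis $G_{P,R}$ is a $p$-group; being a $p$-subgroup of $G_P$ it lies in $S$, whence $G_{P,R}=S_R$. To exclude such an $R$ I would exploit that $\cX\to\cX/S$ is Galois with group $S$, so $N_G(S)/S$ acts faithfully on $\cX/S\cong\mathbb{P}^1$, giving an embedding $N_G(S)/S\hookrightarrow\PGL(2,\K)$; moreover $G_P\le N_G(S)$, so $C=G_P/S$ and, more generally, $N_G(S)/S$ permute the branch points of $\cX\to\cX/S$ while fixing the image of $P$. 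Combining the Dickson-type classification of the possible groups $N_G(S)/S\le\PGL(2,\K)$ (this is where the numerical constants of the statement originate) with the size constraints forced by $|G|\ge 900g^2$ in the two-orbit Riemann--Hurwitz relation for $\cX\to\cX/G$ (which make $|S|$ large relative to $g$ and pin down $|G_P|$, $|G_Q|$ and $|O_1|$), together with the bound $1+g/(p-1)$ on the number of fixed points of an element of order $p$, I would show that a second $S$-fixed point in $O_1$ is incompatible with the genus. Once $S$ is known to act freely on $O_1\setminus\{P\}$, the displayed identity gives $\gamma(\cX)=0$.

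The main obstacle is precisely this last step. The per-element fixed-point bound $1+g/(p-1)$ is too weak on its own, because $S$ has many elements and the incidences $\sum_{\sigma\neq 1}|\mathrm{Fix}_{O_1\setminus\{P\}}(\sigma)|=\gamma(\cX)$ could a priori be large; the argument must instead control the \emph{global} $S$-orbit structure on $O_1$, equivalently the mutual intersections of the wild inertia subgroups $G_R^{(1)}$ for $R\in O_1$, and show these intersections are trivial off $P$. It is here that both the purity hypothesis on $G_{P,R}$ (which rules out mixed two-point stabilisers and forces $G_{P,R}=S_R$) and the exact value of the constant $900$ are needed, and I expect this interplay between the $\PGL(2,\K)$-classification and the Riemann--Hurwitz size estimates to be the technical heart of the proof.
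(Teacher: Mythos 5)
Your opening reduction is correct and, in fact, organizationally sharper than the paper's own treatment of Part 4: once $S=G_P^{(1)}$ and $\cX/S$ is rational, the Deuring--Shafarevich identity $\gamma(\cX)=\sum_{R\neq P}(|S_R|-1)$ is exactly right, and it makes the paper's cases (iv.2) and (iv.3) (where no non-trivial element of $S$ fixes a point other than $P$) completely immediate, whereas the paper disposes of (iv.3) through a long $2$-transitivity analysis resting on Theorem 2.9 (Kantor--O'Nan--Seitz), a minimal-genus induction and even Mih\u{a}ilescu's theorem. The problem is that the one configuration you correctly identify as the ``technical heart'' --- a point $R\in O_1\setminus\{P\}$ with $S_R\neq 1$, which is the paper's case (iv.1) --- is precisely the step you do not prove. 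Your sketched route (embed $N_G(S)/S$ into $\PGL(2,\K)$, invoke Dickson's classification, and combine per-element fixed-point bounds with size estimates) is never developed into an argument, and you concede as much. More importantly, it is missing the decisive mechanism the paper uses, namely a bound on $|S|$ in terms of $g$: by your own identity this configuration forces $\gamma(\cX)>0$, and then Nakajima's theorem (Item 1 of Result 2.8) gives $|G_P^{(1)}|\leq p(g-1)/(p-2)\leq 3(g-1)$; the paper further shows $|U|=1$ (a non-trivial prime-to-$p$ complement $U$ would fix a second point of $\cX$, yielding a two-point stabilizer that is neither trivial nor a $p$-group, against the purity hypothesis), so $G_P=G_P^{(1)}$ is small, and substituting $d_P\geq 2|G_P^{(1)}|-2$ into the Riemann--Hurwitz relation for the two short orbits yields $|G|\leq 24g^2$, contradicting $|G|\geq 900g^2$. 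Your proposal contains no substitute for Nakajima's bound --- the classification of subgroups of $\PGL(2,\K)$ constrains $N_G(S)/S$, not $|S|$ itself --- so the size constraints you invoke cannot by themselves produce the contradiction, and the hard step remains a genuine gap rather than a deferred routine verification.

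Separately, the statement to be proved is the full four-part theorem, and your proposal addresses only Part 4, quoting Parts 1 and 2 to obtain the two-orbit structure and never touching Part 3. Using Parts 1--2 as input to Part 4 mirrors the paper's own logic and is fine as a reduction, but Parts 1--3 are not preliminaries from the literature: they are the content of the paper's Section 3 (Lemma 3.1, Theorems 3.2 and 3.3, and Proposition 3.4, the last again via a minimal-genus counterexample run through the classification of finite $2$-transitive groups). As it stands, the proposal proves none of these, so even if your Part 4 outline were completed along the lines above, it would cover only a fraction of the statement.
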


Note that this theorem implies that whenever a quadratic bound like $|Aut(\mathcal{X})| >336g^2$ holds, then the action of the group is completely known, having one tame and one non-tame short orbits.

The paper is organized as follows. In Section 2 some preliminary results on automorphism groups of algebraic curves in positive characteristic are recalled. In Section 3 Parts 1-3 of Theorem \ref{maint} are proven, while Part 4 is the main object of Section 4.

\section{Preliminary Results}
In this paper, $\cX$ stands for a (projective, geometrically irreducible, non-singular) algebraic curve of genus $g=g(\cX) \ge 2$ defined over an algebraically closed field $\K$ of odd characteristic $p$. Let $\aut(\cX)$ be the group of all automorphisms of $\cX$. The assumption $g(\cX)\geq 2$ ensures that $\aut(\cX)$ is finite. However the classical Hurwitz bound
 $|\aut(\cX)| \leq 84(g(\cX)-1)$ for complex curves fails in positive characteristic, and there exist four families of curves satisfying $|\aut(\cX)|\geq 8g(\cX)^3$; see \cite{stichtenoth1973II}, Henn \cite{henn1978}, and also \cite[Section 11.12]{HKT}. 

For a subgroup $G$ of $\aut(\cX)$, let $\bar \cX$ denote a non-singular model of $\K(\cX)^G$, that is,
a (projective non-singular geometrically irreducible) algebraic curve with function field $\K(\cX)^G$, where $\K(\cX)^G$ consists of all elements of $\K(\cX)$ fixed by every element in $G$. Usually, $\bar \cX$ is called the
quotient curve of $\cX$ by $G$ and denoted by $\cX/G$. The field extension $\K(\cX)|\K(\cX)^G$ is Galois of degree $|G|$.

Let $\Phi$ be the cover of $ \Phi: \cX \rightarrow \bar{\cX}$ where $\bar{\cX}=\cX/G$. A point $P\in\cX$ is a ramification point of $G$ if the stabilizer $G_P$ of $P$ in $G$ is nontrivial; the ramification index $e_P$ is $|G_P|$; a point $\bar{Q}\in\bar{\cX}$ is a branch point of $G$ if there is a ramification point $P\in \cX$ such that $\Phi(P)=\bar{Q}$; the ramification (branch) locus of $G$ is the set of all ramification (branch) points. The $G$-orbit of $P\in \cX$ is the subset
$o=\{R\mid R=g(P),\, g\in G\}$ of $\cX$, and it is {\em long} if $|o|=|G|$, otherwise $o$ is {\em short}. For a point $\bar{Q}$, the $G$-orbit $o$ lying over $\bar{Q}$ consists of all points $P\in\cX$ such that $\Phi(P)=\bar{Q}$. If $P\in o$ then $|o|=|G|/|G_P|$ and hence $\bar{\cQ}$ is a branch point if and only if $o$ is a short $G$-orbit. It may be that $G$ has no short orbits. This is the case if and only if every non-trivial element in $G$ is fixed--point-free on $\cX$, that is, the cover $\Phi$ is unramified. On the other hand, $G$ has a finite number of short orbits.

For a non-negative integer $i$, the $i$-th ramification group of $\cX$
at $P$ is denoted by $G_P^{(i)}$ (or $G_i(P)$ as in \cite[Chapter
IV]{serre1979})  and defined to be
$$G_P^{(i)}=\{\alpha \in G_P \mid \ord_P(\alpha(t)-t)\geq i+1\}, $$ where $t$ is a uniformizing element (local parameter) at
$P$. Here $G_P^{(0)}=G_P$.

Let $\bar{g}$ be the genus of the quotient curve $\bar{\cX}=\cX/G$. The \emph{Hurwitz
genus formula} \cite[Theorem 3.4.13]{Sti} gives the following equation
    \begin{equation}
    \label{eq1}
2g-2=|G|(2\bar{g}-2)+\sum_{P\in \cX} d_P,
    \end{equation}
    where the different $d_P$ at $P$ is given by
\begin{equation}
\label{eq1bis}
d_P= \sum_{i\geq 0}(|G_P^{(i)}|-1),
\end{equation}
see \cite[Theorem 11.70]{HKT}.

Let $\gamma$ be the $p$-rank of $\cX$, and let $\bar{\gamma}$ be the $p$-rank of the quotient curve $\bar{\cX}=\cX/G$.
A formula relating $\gamma$ and $\bar \gamma$ is known whenever $G$ is a $p$-group.
Indeed if $G$ is a $p$-group, the \emph{Deuring-Shafarevich formula} states that
\begin{equation}
    \label{eq2deuring}
\gamma-1={|G|}(\bar{\gamma}-1)+\sum_{i=1}^k (|G|-\ell_i),
    \end{equation}

where $\ell_1,\ldots,\ell_k$ are the sizes of the short orbits of $G$; see \cite{sullivan1975} or \cite[Theorem 11,62]{HKT}.

A subgroup of $\aut(\cX)$ is  a prime-to-$p$ group (or a $p'$-subgroup) if its order is prime to $p$. A subgroup $G$ of $\aut(\cX)$ is {\em{tame}} if the $1$-point stabilizer of any point in $G$ is $p'$-group. Otherwise, $G$ is {\em{non-tame}} (or {\em{wild}}). By \cite[Theorem 11.56]{HKT}, if $|G|>84(g(\cX)-1)$ then $G$ is non-tame. 

An orbit $o$ of $G$ is {\em{tame}} if $G_P$ is a $p'$-group for $P\in o$. 
The following lemma gives a strong restriction to the action of the Sylow $p$-subgroup of the stabilizer of a point $P \in \cX$ when $\gamma=\gamma(\cX)=0$.

\begin{result} \rm{\cite[Lemma 11.129 ]{HKT}} \label{trivialintersectionset}
If $\gamma(\cX)=0$ then every element of order $p$ in $\aut(\cX)$ has exactly one fixed point on $\cX$.
\end{result}

Bounds for the order of tame automorphism groups fixing a point are known; see \cite[Theorem 11.60]{HKT}.

\begin{result} \label{tameorder}
 Let $\cX$ be an irreducible curve of genus $g > 0$, and let $G_P$ be a $\K$-automorphism group of $\cX$ fixing a point $P$. If the order $n$ of $G_P$ is prime to $p$, then $n \leq 4g + 2$. 
\end{result}

%
%
%

Strong restrictions for the short orbits structure of automorphism groups of algebraic curves are known when the Hurwitz bound $84(g(\cX)-1)$ fails. In particular, the following theorem ensures that automorphism groups for which the Hurwitz bound is not satisfied have at most three short orbits.

\begin{theorem} \rm{\cite[Theorem 11.56]{HKT}} \label{Hurwitz}
 Let $\cX$ be an irreducible curve of genus $g \geq  2$ defined over a field $\K$ of characteristic $p$. 
\begin{itemize}
\item If $G$ is a $\K$-automorphism group of $\cX$, then the Hurwitz's upper bound $|G| \leq 84(g-1)$ holds in general with exceptions occurring only if $p>0$. 
\item If $p>0$ then exceptions can only occur when the fixed field $\K(\cX)^G$ is rational and $G$ has at most three short orbits as follows: 
\begin{enumerate}
\item exactly three short orbits, two tame and one non-tame, with $p \geq 3$;
\item  exactly two short orbits, both non-tame; 
\item  only one short orbit which is non-tame; 
\item  exactly two short orbits, one tame and one non-tame.
\end{enumerate}
\end{itemize}
\end{theorem}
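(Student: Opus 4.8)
The plan is to read off the classification directly from the Hurwitz genus formula \eqref{eq1}, normalized by $|G|$. Grouping the points of $\cX$ into $G$-orbits, let $o_1,\dots,o_r$ be the short orbits, with representatives $P_1,\dots,P_r$ and stabilizer orders $e_j=|G_{P_j}|$; since $o_j$ has length $|G|/e_j$ and all its points carry the same different, \eqref{eq1} and \eqref{eq1bis} give, after division by $|G|$,
\begin{equation*}
\frac{2g-2}{|G|}=(2\bar g-2)+\sum_{j=1}^{r}c_j,\qquad c_j:=\frac{d_{P_j}}{e_j}.
\end{equation*}
A tame orbit has $d_{P_j}=e_j-1$ with $e_j\ge 2$, so $c_j=1-1/e_j\in[\tfrac{1}{2},1)$; in characteristic $2$ a tame stabilizer has odd order $\ge 3$, whence $c_j\ge\tfrac{2}{3}$ there. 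A non-tame orbit has $G_{P_j}^{(1)}\neq\{1\}$, so by \eqref{eq1bis}
\begin{equation*}
d_{P_j}\ge(e_j-1)+(|G_{P_j}^{(1)}|-1)\ge(e_j-1)+(p-1),
\end{equation*}
giving $c_j\ge 1+(p-2)/e_j$, and in particular $c_j>1$ whenever $p$ is odd.

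First I would pin down the quotient genus. If $\bar g\ge 2$ then the term $2\bar g-2\ge 2$ alone forces $|G|\le g-1$; if $\bar g=1$ then (as $g\ge 2$ guarantees a short orbit) the right-hand side is $\ge\min_j c_j\ge\tfrac{1}{2}$, so $|G|\le 4(g-1)$. In both cases Hurwitz's bound holds, so an exception forces $\bar g=0$, that is, $\K(\cX)^G$ is rational. The identity then reads $\sum_j c_j-2=(2g-2)/|G|$, a positive quantity, whence $\sum_j c_j>2$; and an exception means $|G|>84(g-1)$, i.e. $(2g-2)/|G|<\tfrac{1}{42}$. This yields the decisive pinch
\begin{equation*}
2<\sum_{j=1}^{r}c_j<\frac{85}{42}.
\end{equation*}

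The classification is now a finite search. If every short orbit is tame, then among all integer signatures $e_j\ge 2$ the minimal value of $\sum_j(1-1/e_j)$ exceeding $2$ equals $\tfrac{85}{42}$, attained at the signature $(2,3,7)$; hence $\sum_j c_j\ge\tfrac{85}{42}$ and no exception occurs, so at least one non-tame orbit must be present. Let $w$ and $t$ be the numbers of non-tame and tame orbits. Three non-tame orbits give $\sum_j c_j\ge 3$, and two non-tame orbits contribute at least $2$, so that adding any tame orbit would force $\sum_j c_j\ge 2+\tfrac{1}{2}>\tfrac{85}{42}$; thus $w\le 2$, and $w=2$ forces $t=0$ (case 2). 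When $w=1$ the tame contributions must total less than $\tfrac{85}{42}-1=\tfrac{43}{42}$, whence $t\le 2$, producing the three remaining configurations: $t=0$ (case 3), $t=1$ (case 4) and $t=2$ (case 1). The restriction $p\ge 3$ in case 1 is forced, since in characteristic $2$ two tame orbits contribute at least $2\cdot\tfrac{2}{3}=\tfrac{4}{3}$, which together with a non-tame contribution $\ge 1$ already breaches $\sum_j c_j<\tfrac{85}{42}$. In every surviving case $\bar g=0$ and $r\le 3$, as claimed.

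The main obstacle I foresee is the sharp control of the non-tame contributions $c_j=d_{P_j}/e_j$. The crude estimate $d_{P_j}\ge(e_j-1)+(p-1)$ suffices to separate the cases for odd $p$ and to bound $w$, but confirming that the surviving configurations are genuinely compatible with the narrow window $\sum_j c_j<\tfrac{85}{42}$---and, in characteristic $2$, ensuring that the higher ramification groups $G_{P_j}^{(0)}\supseteq G_{P_j}^{(1)}\supseteq\cdots$ do not inflate $c_j$ beyond what the window permits---requires the finer arithmetic of the filtration through \eqref{eq1bis}. This delicate bookkeeping of wild ramification, rather than any single conceptual step, is where the real effort concentrates.
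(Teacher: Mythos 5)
Your proof is correct, and it is essentially the same argument as the proof of the cited result \cite[Theorem 11.56]{HKT} (the paper itself quotes this theorem without reproving it): normalize the Hurwitz genus formula by $|G|$, rule out $\bar g\geq 1$, and pinch $2<\sum_j d_{P_j}/e_j<\tfrac{85}{42}$ using the $(2,3,7)$ minimum, with tame contributions in $[\tfrac{1}{2},1)$ (at least $\tfrac{2}{3}$ when $p=2$) and wild contributions at least $1$ (exceeding $1$ for odd $p$). The only comment worth adding is that your closing concern is moot: the theorem claims only that exceptions are confined to these four configurations, so no verification that the surviving signatures are actually realizable (nor any finer analysis of the higher ramification filtration) is needed.
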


\begin{theorem} \rm{\cite[Theorem 11.116 and Theorem 11.125]{HKT}} \label{bounds}
 Let $\cX$ be an algebraic curve of genus $g \geq  2$ defined over an algebraically closed field $\K$ of positive characteristic $p$. If $G$ is an automorphism group of $\cX$ with $|G| > 84(g-1)$, then an upper bound for the order of $G$ in Cases 1,2 of Theorem \ref{Hurwitz} is given by:
\begin{enumerate}
\item $|G| <24g^2$,
\item $|G|<16g^2$,
\end{enumerate}
respectively. 
If $G$ satisfies Case 3 of Theorem \ref{Hurwitz} then $|G|<8g^3$. If $G$ satisfies Case $4$ in Theorem \ref{Hurwitz} then $|G|<8g^3$ unless one of the following cases occurs up to isomorphism over $\K$:
\begin{itemize}
\item $p = 2$ and $\cX$ is the non-singular model of the plane curve $Y^2 + Y = X^{2k+1}$, with $k > 1$; 
\item $p > 2$ and $\cX$ is the non-singular model of $Y^2 = X^n -X$, where $n = p^h$ and $h > 0$;
\item $\cX$ is the Hermitian curve $\mathcal{H}_q: Y^{q+1}=X^q+X$ where $q = p^h$ and $h > 0$;
\item  $\cX$ is the non-singular model of the Suzuki curve $\mathcal{S}_q: X^{q_0}(X^q + X) = Y^q + Y$, where $q_0 = 2^r$, $r \geq 1$ and $q = 2q_0^2$. 
\end{itemize}
Furthermore, all the above algebraic curves have $p$-rank zero.
\end{theorem}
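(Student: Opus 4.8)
The common engine is the Hurwitz genus formula \eqref{eq1}, specialised to the situation guaranteed by Theorem \ref{Hurwitz}: since $|G|>84(g-1)$ the fixed field $\K(\cX)^G$ is rational, so $\bar g=0$ and
\[
2g-2=-2|G|+\sum_{o}\frac{|G|}{e_o}\,d_o,
\]
where the sum runs over the (at most three) short orbits $o$, $e_o=|G_P|$ is the stabiliser order and $d_o=d_P$ is the different at a representative $P\in o$. I would split each different through \eqref{eq1bis} into a tame and a wild part: for a tame orbit $d_o=e_o-1$, whereas for a non-tame orbit the stabiliser $G_P$ has normal Sylow $p$-subgroup $G_P^{(1)}$ of order $p^{s}$ with cyclic prime-to-$p$ quotient $G_P/G_P^{(1)}$ of order $m=e_o/p^{s}$, and
\[
d_o=(e_o-1)+\sum_{i\ge 1}\bigl(|G_P^{(i)}|-1\bigr)\ \ge\ (e_o-1)+(p^{s}-1).
\]
Dividing the genus equation by $|G|$ turns it into a constraint on the ratios $d_o/e_o$, and the whole theorem reduces to bounding these ratios in each of the four configurations.

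For the quadratic bounds (Cases 1 and 2) the plan is to exploit that the wild ramification cannot be deep. In Case 1 the two tame stabilisers have order at most $4g+2$ by Result \ref{tameorder}, so each contributes a ratio $1-1/e_o$ close to $1$; inserting this together with the estimate for the single wild orbit into the rewritten genus equation, and using $p\ge 3$ to control the cyclic quotient $G_P/G_P^{(1)}$, the arithmetic collapses to $|G|<24g^2$. In Case 2 both orbits are wild; here I would combine the lower bounds for the two wild differents with the cyclicity of each $G_P/G_P^{(1)}$, so that the two orbit sizes and their differents are locked together in the genus equation and force $|G|<16g^2$. In both cases the point is that the configuration bounds every ratio $d_o/e_o$ by an absolute constant away from the extremal Hermitian/Suzuki behaviour, so that $g$ must grow like $|G|^{1/2}$.

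For the cubic bound I would instead let the contribution of a non-tame orbit dominate. In Case 3 a single wild orbit carries all of $\sum d_o$, and bounding the ramification filtration $G_P^{(1)}\supseteq G_P^{(2)}\supseteq\cdots$ by means of the cyclicity of $G_P/G_P^{(1)}$ and the standard jump estimates already yields $|G|<8g^3$. Case 4 (one tame and one wild orbit) is the delicate one: the genus equation still gives $|G|<8g^3$ unless the wild orbit is extremal, that is, unless the filtration $G_P^{(i)}$ and the tame orbit realise equality in all of the estimates simultaneously. The strategy is then to show that this extremality rigidly pins down the structure of $G_P^{(1)}$ --- its order, its nilpotency class, and the single jump in its filtration --- together with that of $G/G_P^{(1)}$, and to reconstruct explicit generators and a defining equation for $\K(\cX)$ from these data. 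Carrying out this reconstruction is the main obstacle, and it is exactly where the analyses of Stichtenoth and Henn enter; it produces precisely the four listed families (the two Artin--Schreier curves, the Hermitian curve and the Suzuki curve).

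Finally, to see that each exceptional curve has $p$-rank zero I would apply the Deuring--Shafarevich formula \eqref{eq2deuring} to a suitable $p$-subgroup $S$ of $\aut(\cX)$. For each of the four curves $S$ can be chosen so that it fixes a unique point and $\cX/S$ is rational; then \eqref{eq2deuring} has $\bar\gamma=0$ and a single short orbit of size $\ell_1=1$, and reads $\gamma-1=|S|(0-1)+(|S|-1)=-1$, giving $\gamma=0$. (One may alternatively read off $\gamma=0$ from the known supersingularity of these curves.) The genuine difficulty of the theorem is thus concentrated in the extremal reconstruction of Case 4; the remaining cases are, in principle, a careful but routine manipulation of the genus formula \eqref{eq1} together with Result \ref{tameorder}.
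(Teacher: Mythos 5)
You should first be aware that the paper contains no proof of Theorem \ref{bounds}: it is quoted wholesale from \cite[Theorems 11.116 and 11.125]{HKT}, i.e.\ from Stichtenoth's bounds \cite{stichtenoth1973I,stichtenoth1973II} and Henn's classification \cite{henn1978}, so your proposal has to be measured against the literature proof rather than anything in the paper. Against that benchmark, your outline is a faithful road map: the genus formula \eqref{eq1} over a rational quotient, the tame/wild splitting of each different via \eqref{eq1bis}, and Result \ref{tameorder} are indeed the working parts of the argument. The one piece of your write-up that is genuinely complete and correct is the final $p$-rank computation: each of the four exceptional curves carries an elementary abelian $p$-subgroup $S$ of Artin--Schreier translations (of order $2$, $n$, $q$, $q$ respectively) whose fixed field is rational and which fixes exactly one point, so \eqref{eq2deuring} gives $\gamma-1=|S|(0-1)+(|S|-1)=-1$, hence $\gamma=0$.

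The gaps lie precisely in the parts that carry the theorem's content. (a) In Cases 1 and 2, the claim that ``the arithmetic collapses'' to $|G|<24g^2$, resp.\ $|G|<16g^2$, conceals the actual mechanism: writing $2g-2=|G|\bigl(\sum_o d_o/e_o-2\bigr)$, the hypothesis $|G|>84(g-1)$ only says the bracket is positive and less than $1/42$, and one must then classify the finitely many admissible configurations of stabiliser orders that keep it that small, converting each into a quadratic bound via Result \ref{tameorder} and bounds on $|G_P^{(1)}|$; nothing in your text does this. (b) In Case 3, cyclicity of $G_P/G_P^{(1)}$ and ``standard jump estimates'' do not produce a cubic bound: the decisive inputs are that the unique short orbit has size dividing $2g-2$ (the paper's Lemma \ref{1o}), that $|U|\le 4g+2$, and above all a bound on $|G_P^{(1)}|$ of order at most $g$ or $g^2$ (Items 1 and 3 of Result \ref{list}, i.e.\ Nakajima's and Stichtenoth's theorems, according to whether $\cX/G_P^{(1)}$ is rational); without such a bound on the wild stabiliser the genus formula alone bounds nothing. (c) Most importantly, in Case 4 you defer the identification of the four exceptional curves entirely to ``the analyses of Stichtenoth and Henn''. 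That identification \emph{is} the theorem; deferring it means the proposal cites the result rather than proves it --- which is exactly what the paper does, but openly, by attributing the statement to \cite{HKT}.
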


Theorem \ref{bounds} shows in particular that a quadratic bound on $|G|$ in Case 4 of Theorem \ref{Hurwitz} is not possible. However nothing is known in Case 3 of Theorem \ref{Hurwitz}. One of the first aims of this paper is to show that actually a quadratic bound with respect to the genus can be found also in this case.

\begin{rem}
Examples of algebraic curves of genus $g$ with approximately $g^2$ automorphisms satisfying Case 4 in Theorem \ref{Hurwitz} are known. Given a prime power $q$, the GK curve $\cC$  is given by the affine model,
$$\cC: \begin{cases} y^{q+1}=x^q+x, \\ z^{q^2-q+1}=y^{q^2}-y, \end{cases}$$ 
see \cite{GKcurve}.
The curve $\cC$ has genus $g(\cC)=(q^5-2q^3+q^2)/2$ and it is $\mathbb{F}_{q^6}$-maximal. The automorphism group of $\cC$ is defined over $\mathbb{F}_{q^6}$ and has order $q^3(q^3+1)(q^2-1)(q^2-q+1) \sim 4g(\cC)^2$. The set $\cC(\mathbb{F}_{q^6})$ of the $\mathbb{F}_ {q^6}$-rational points of $\cC$ splits into two orbits under the action of $Aut(\cC)$: $O_1=\cC(\mathbb{F}_{q^2})$  and $O_2=\cC(\mathbb{F}_{q^6}) \setminus \cC(\mathbb{F}_{q^2})$. The orbit $O_1$ is non-tame while $O_2$ is tame. Case 4 is indeed satisfied, see \cite[Theorem 7]{GKcurve}.

Other two examples are the cyclic extensions of the Suzuki and Ree curves constructed in \cite{Skabe}. Again the order of the automorphism group of these curves is approximately $4g^2$ and it satisfies Case 4 in Theorem \ref{Hurwitz}; see \cite{GMQZ}.
All the examples written in this remark have $p$-rank zero.
\end{rem}

 In order to give our partial answer to Open Problem \ref{op} the following lemmas from \cite{GK} will be used.

\begin{lemma}\rm{\cite[Lemma 4.1 and Remark 4.3]{GK}} \label{terribile}
Let $\cX$ be an algebraic curve of genus $g \geq  2$ defined over an algebraically closed field $\K$ of odd characteristic $p$. 
Let $H$ be an automorphism group of $\cX$ with a normal Sylow $d$-subgroup $Q$ of odd order. Suppose that a complement $U$ of $Q$ in $H$ is cyclic and that $N_H(U) \cap Q=\{1\}$. If 
$$|H| \geq 30(g-1),$$
then $d=p$ and $U$ is cyclic. Moreover, the quotient curve $\bar{\cX}=\cX/Q$ is rational and either 
\begin{enumerate}
\item $\cX$ has positive $p$-rank, $Q$ has exactly two (non-tame) short orbits, and they are also the only short orbits of $H$; or
\item $\cX$ has zero $p$-rank and $H$ fixes a points.
\end{enumerate}
If $d=p$ is assumed then the hypothesis $N_H(U) \cap Q=\{1\}$ is unnecessary.
\end{lemma}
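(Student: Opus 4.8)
The strategy is to push the problem onto the two quotient curves $\cX/H$ and $\bar{\cX}:=\cX/Q$, using Hurwitz's genus formula to force rationality, the rigidity of tame triangle actions to pin down $d=p$, and the Deuring--Shafarevich formula to read off the $p$-rank. First I would show that $\cX/H$ is rational. Writing $g_0:=g(\cX/H)$, formula (\ref{eq1}) gives $2g-2=|H|(2g_0-2)+\sum_P d_P$. If $g_0\ge 2$ then $2g-2\ge 2|H|\ge 60(g-1)$, impossible for $g\ge 2$; if $g_0=1$ then $2g-2=\sum_P d_P$, and any ramification makes $\sum_P d_P\ge |H|/2\ge 15(g-1)$ while no ramification forces $g=1$, again impossible. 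Hence $g_0=0$. I stress that this step uses only $|H|\ge 30(g-1)$ and holds for \emph{every} admissible $Q$, so it will not create circularity later.

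Next I would prove $d=p$; this is the main obstacle. Since $Q$ is nilpotent and $U$ is cyclic, $H=Q\rtimes U$ is solvable. Suppose first that $p\nmid|H|$, so $H$ is tame. Then Hurwitz's bound together with $|H|\ge 30(g-1)$ and $g_0=0$ gives $\sum_i(1-1/e_i)=2+(2g-2)/|H|\le 31/15$, and a short check shows the only tame signatures compatible with this are the triangle signatures $(2,3,7),(2,3,8),(2,3,9),(2,3,10),(2,4,5)$, so $H$ is a solvable quotient of one of the corresponding triangle groups, generated by three cyclic point stabilizers. Using that $Q$ has \emph{odd} order and that $U$ is a self-normalizing cyclic complement to the normal Sylow $d$-subgroup $Q$, I would verify that no such solvable quotient can carry this rigid normal structure, a contradiction; this rules out $p\nmid|H|$. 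The remaining possibility, $d\ne p$ with $p\mid|U|$, is the delicate one: here the Sylow $p$-subgroup $S\le U$ is cyclic but in general non-normal, and I would exclude it by a ramification estimate for the action of $QS$ against $|H|\ge 30(g-1)$. Disposing of this last sub-case \emph{without} already knowing $\bar{\cX}$ rational is exactly the hard point.

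Once $d=p$, the complement $U\cong H/Q$ is a $p'$-group, hence tame on $\bar{\cX}$, and I can then show $\bar{\cX}$ is rational. From (\ref{eq1}) applied to $Q$ one has $2g-2=|Q|(2\bar g-2)+\sum_P d_P\ge |Q|(2\bar g-2)$, with $\bar g:=g(\bar{\cX})$. If $\bar g\ge 2$, then $|Q|\le g-1$ and the classical bound $|U|\le 4\bar g+2$ for tame cyclic automorphism groups give $|H|=|Q|\,|U|\le 4(g-1)+6|Q|\le 10(g-1)$, a contradiction; if $\bar g=1$, then, since $Q$ is a $p$-group its different at a ramified point is at least $2(|Q_P|-1)$, forcing $|Q|\le\tfrac32(g-1)$, while the crystallographic restriction (a cyclic group with rational quotient of a genus-one curve has order at most $6$) gives $|U|\le 6$, so $|H|\le 9(g-1)$, again impossible. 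Hence $\bar g=0$, $\bar{\cX}\cong\mathbb{P}^1$, and in particular $\bar\gamma:=\gamma(\bar{\cX})=0$.

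Finally, since $Q$ is a normal $p$-group and $\bar{\cX}$ is rational, the Deuring--Shafarevich formula (\ref{eq2deuring}) reads $\gamma-1=-|Q|+\sum_{i=1}^{k}(|Q|-\ell_i)$, where $\ell_1,\dots,\ell_k\le |Q|/p$ are the short-orbit sizes of $Q$, so each summand satisfies $|Q|-\ell_i\ge|Q|(1-1/p)\ge\tfrac23|Q|$. If $\gamma=0$, then $\sum_i(|Q|-\ell_i)=|Q|-1$ forces $k=1$ and $\ell_1=1$, i.e. $Q$ fixes a point $P$; as $Q\trianglelefteq H$, the singleton fixed by $Q$ is $H$-invariant, so $H$ fixes $P$. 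If $\gamma>0$, the same inequalities give $k\ge 2$, while feeding Hurwitz's formula for $Q$ back against $|H|\ge 30(g-1)$ forbids a third orbit, so $k=2$ and both orbits are non-tame. To see these are the \emph{only} short orbits of $H$ I would invoke the self-normalizing hypothesis through Schur--Zassenhaus: any point of $\mathbb{P}^1$ fixed by $U$ lifts to a point of $\cX$ fixed by a conjugate of $U$, and $N_H(U)\cap Q=\{1\}$ permits at most one such fixed point per $Q$-orbit, matching the two fixed points of the cyclic $U$ on $\mathbb{P}^1$ with the images of the two non-tame orbits and excluding any extra tame short orbit. Since this Deuring--Shafarevich dichotomy used neither the cyclicity of $U$ nor the self-normalizing condition, assuming $d=p$ from the outset makes the hypothesis $N_H(U)\cap Q=\{1\}$ superfluous, which is the content of the closing remark.
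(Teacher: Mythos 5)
First, note that the paper itself offers no proof to compare against: the lemma is imported verbatim from \cite[Lemma 4.1 and Remark 4.3]{GK}. Judged on its own, your proposal has a genuine gap at its core: the assertion $d=p$, which is the first (and hardest) conclusion of the lemma, is never proved. In the tame subcase $p\nmid |H|$ you stop at ``I would verify that no such solvable quotient can carry this rigid normal structure'', and in the mixed subcase $d\neq p$, $p\mid |U|$ you stop at ``I would exclude it by a ramification estimate'', which you yourself identify as ``exactly the hard point''. Both are declarations of intent, not arguments. The tame subcase can in fact be closed by elementary group theory: the abelianization of a $(2,3,7)$-, $(2,3,8)$-, $(2,3,9)$-, $(2,3,10)$- or $(2,4,5)$-generated group has order at most $3$, so $U\cong H/Q$ has order at most $3$, and parity ($|Q|$ odd, while $H$ contains an involution) forces $|U|=2$; then for $(2,3,8)$ and $(2,4,5)$ the square of the even-order generator is a nontrivial $2$-element of the odd-order group $Q$, and for $(2,3,10)$ the group $Q$ would have to contain elements of order $3$ and of order $5$, contradicting in each instance that $Q$ is a $d$-group of odd order. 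But no argument of this kind appears in your text, and nothing at all is offered for the mixed subcase, which is precisely where the lemma's difficulty lies.

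Second, even granting $d=p$ and $\bar{\cX}$ rational, your last step has two holes. (a) Hurwitz for $Q$ alone does not ``forbid a third orbit'': if $Q$ has $k\geq 3$ short orbits it only yields $|Q|\leq g-1$, which is perfectly compatible with $|H|\geq 30(g-1)$ because $|U|$ cannot be bounded in terms of $g(\bar{\cX})=0$ (cyclic groups of arbitrary order act on $\mathbb{P}^1$). You must also use that $U$ permutes the short $Q$-orbits and, being cyclic and tame on $\mathbb{P}^1$, has exactly two short orbits there; then $k\geq 3$ forces $k\geq |U|$, whence $|H|\leq |Q|k\leq (2g-2)k/(\tfrac{4}{3}k-2)\leq 3(g-1)$, a contradiction. (b) Your argument that the two short $Q$-orbits are the \emph{only} short $H$-orbits silently assumes that the two fixed points of $U$ on $\mathbb{P}^1$ are the images of those two orbits. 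If instead $U$ swaps the two images, its two fixed points lie under \emph{long} $Q$-orbits, and over each of them a conjugate of $U$ fixes exactly one point (here is where $C_Q(U)\leq N_H(U)\cap Q=\{1\}$ enters), producing tame short $H$-orbits and merging the two $Q$-orbits into a single $H$-orbit --- exactly the configuration the lemma excludes. Ruling this out requires a separate estimate (e.g.\ Hurwitz for $H$ over its three short orbits combined with Nakajima's bound $|Q|\leq p(g-1)/(p-2)$), which you do not supply. Finally, since your ``only short orbits'' step does invoke $N_H(U)\cap Q=\{1\}$, your closing claim that the dichotomy ``used neither the cyclicity of $U$ nor the self-normalizing condition'', and hence that the final remark is automatic, is inconsistent with your own argument.
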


\begin{lemma} \rm{\cite[Lemma 4.8]{GK}}\label{terribile2}
Let $G$ be an automorphism group of an algebraic curve $\cX$ of genus $g \geq 2$ defined over a field of odd characteristic $p$. Suppose that 
\begin{enumerate}
\item $|G| \geq 16g^2$;
\item any two distinct Sylow $p$-subgroups of $G$ have trivial intersection;
\item $G$ has a Sylow $d$-subgroup $Q$ for which its normalizer $N_G(Q)$ contains a subgroup $H$ satisfying the hypotheses of Lemma \ref{terribile};
\end{enumerate} 
then $\cX$ has zero $p$-rank.
\end{lemma}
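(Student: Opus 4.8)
The plan is to argue by contradiction, assuming $\gamma(\cX)>0$. By hypothesis (3) the subgroup $H\le N_G(Q)$ satisfies all hypotheses of Lemma \ref{terribile}, so Lemma \ref{terribile} applies; since $\gamma>0$ we are forced into its first alternative. Thus $d=p$, so $Q$ is a Sylow $p$-subgroup of $G$, the quotient $\cX/Q$ is rational, and $Q$ has exactly two non-tame short orbits $o_1,o_2$, which are the only short orbits of $H$. Writing $\ell_i=|o_i|$ and applying the Deuring--Shafarevich formula \eqref{eq2deuring} to the $p$-group $Q$ (with $\bar\gamma=0$) gives $\gamma=|Q|+1-\ell_1-\ell_2$; since $\ell_i\le |Q|/p\le|Q|/3$ this yields $|Q|\le 3(g-1)$. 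Hypothesis (2) forces the first ramification group $G_{P_i}^{(1)}$ to equal $Q_{P_i}$, the unique Sylow $p$-subgroup containing it being $Q$; consequently any element of $G$ carrying one wild point of $Q$ to another must normalize $Q$, so $o_1$ and $o_2$ can be $G$-fused only inside $N_G(Q)$.

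For $g\ge 4$ we have $|G|\ge 16g^2>84(g-1)$, so Theorem \ref{Hurwitz} applies: $\cX/G$ is rational and $G$ has at most three short orbits of the four listed shapes (the cases $g\in\{2,3\}$ are checked directly). The non-tame short orbits of $G$ are precisely the $G$-orbits meeting $o_1\cup o_2$. If $o_1,o_2$ are not $G$-fused, these give two distinct non-tame orbits; since the classification of Theorem \ref{Hurwitz} forbids two non-tame orbits alongside a tame one, $G$ lies in Case 2, and Theorem \ref{bounds} yields $|G|<16g^2$, contradicting hypothesis (1).

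It remains to treat the fused case, which I expect to be the main obstacle. Here $\ell_1=\ell_2=:\ell$ (conjugate stabilizers), $G$ has a single non-tame short orbit $O$, the whole set of wild points, so $|O|=2\ell\,n_p$ with $n_p$ the number of Sylow $p$-subgroups, whence $|G|=2\,n_p\,|Q|\,e_1$ where $e_1=|G_{P_1}|/|Q_{P_1}|$. The two branch points of $\cX\to\cX/Q$ carry equal different $D_1=\sum_{j\ge1}(|Q_{P_1}^{(j)}|-1)$, and Riemann--Hurwitz \eqref{eq1} for this cover gives $g-1=\ell(D_1-1)$; using $G_{P_1}^{(j)}=Q_{P_1}^{(j)}$ for $j\ge1$ and $d_{P_1}=(|G_{P_1}|-1)+D_1$ this produces the identity
\[
\frac{d_{P_1}}{|G_{P_1}|}=1+\frac{g-1}{|Q|\,e_1}.
\]
Feeding this into the Hurwitz genus formula for $G$ and invoking that $N_G(Q)/Q\le\PGL(2,\K)$ is a prime-to-$p$ group fusing $\{\bar P_1,\bar P_2\}$, hence dihedral, pins the remaining (tame) short orbits. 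If there is no tame orbit one finds $|G|=2(g-1)(n_p-1)$ and $|Q|e_1=(g-1)(1-1/n_p)$, which lies in the open interval $(g-2,g-1)$ and so cannot be an integer, a contradiction valid for every $n_p$. If there are two tame orbits, their contributions absorb the constant $-1$ in the Hurwitz formula and give $|G|\le 2|Q|e_1$.

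The quantitative heart of the fused case is the ramification bound $e_1\le |Q_{P_1}|-1=|Q|/\ell-1$ (the tame part of a wild stabilizer embeds faithfully in the automorphisms of the first graded piece of its ramification filtration). With $\gamma=|Q|+1-2\ell$ and $|Q|\le 3(g-1)$ (equivalently $\ell\le\gamma-1$), the bound $|G|\le 2|Q|e_1$ becomes
\[
|G|\le \frac{2|Q|\,(|Q|-\ell)}{\ell}=\frac{2a^2}{\ell}+6a+4\ell<16(a+1)^2=16\gamma^2\le 16g^2,\qquad a=\gamma-1,
\]
the strict inequality holding for all integers $1\le\ell\le a$ because the middle expression is convex in $\ell$ and is dominated by $16(a+1)^2$ at both endpoints. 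This contradicts hypothesis (1). The one configuration still demanding care is a genuine single tame $G$-orbit (Case 4) when $Q$ is not normal: there the $N_G(Q)$-orbits governed by the dihedral action on $\cX/Q$ must be reassembled into $G$-orbits, and the hardest part of the argument is to show that this reassembly cannot avoid Case 1 or Case 3 already handled above. Once this is established all branches contradict hypothesis (1), proving $\gamma(\cX)=0$.
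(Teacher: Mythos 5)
First, a point of comparison: the paper does not prove this lemma at all --- it is imported verbatim from \cite[Lemma 4.8]{GK} --- so your attempt has to stand entirely on its own. Much of your scaffolding is sound: forcing alternative 1 of Lemma \ref{terribile}, the Deuring--Shafarevich identity $\gamma=|Q|+1-\ell_1-\ell_2$ giving $|Q|\le 3(g-1)$, the TI-driven identification $G_{P_i}^{(1)}=Q_{P_i}$, the fused/non-fused dichotomy (with the non-fused case landing in Case 2 of Theorem \ref{Hurwitz} and contradicting Theorem \ref{bounds}), and the non-integrality contradiction when $G$ has no tame short orbit are all correct. But two of the branches permitted by Theorem \ref{Hurwitz} are not closed. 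The first genuine gap is the ``ramification bound'' $e_1\le |Q_{P_1}|-1$, which is the only thing that closes your two-tame-orbit branch (there Theorem \ref{bounds} gives only $|G|<24g^2$, perfectly compatible with hypothesis (1), and the crude bounds $|Q|\le 3(g-1)$, $e_1\le 4g+2$ give only $|G|\le 2|Q|e_1<24g^2$ as well). The justification you give --- that the tame part embeds faithfully in the automorphisms of the first graded piece of the ramification filtration --- is false as a general principle: $U$ acts on $G_P^{(1)}/G_P^{(2)}$ by the scalar $\zeta^{-j}$, where $\zeta$ is the cotangent character of $u$ and $j$ is the first ramification jump, so every $u$ with $\zeta^{j}=1$ lies in the kernel. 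Concretely, on Roquette's curve $Y^2=X^{p}-X$ the stabilizer of the point at infinity is $G_P^{(1)}\rtimes U$ with $|G_P^{(1)}|=p$ and $U$ cyclic of order $2(p-1)>p-1$ (the hyperelliptic involution is central, hence acts trivially by conjugation). That curve has $\gamma=0$, so it does not contradict the lemma itself, but it shows your stated principle proves nothing; any such bound would have to be extracted from the positive-$p$-rank and TI hypotheses, which you do not do.

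The second, and larger, gap is that you explicitly concede the branch in which $G$ has exactly one tame and one non-tame short orbit (Case 4 of Theorem \ref{Hurwitz}): you say the $N_G(Q)$-orbits ``must be reassembled'' into $G$-orbits and that showing this cannot escape the cases already treated is ``the hardest part of the argument.'' That is precisely the substantive case --- it is where the Hermitian, Suzuki and GK-type actions live, and there the only general bound is $|G|<8g^3$, so no contradiction with $|G|\ge 16g^2$ is available without real work; an admitted ``to be established'' step in the only genuinely hard branch means the proof is incomplete. A further, smaller defect: for $g\in\{2,3\}$ one has $16g^2<84(g-1)$, so hypothesis (1) does not put $G$ above the Hurwitz bound and Theorem \ref{Hurwitz} cannot even be invoked; ``checked directly'' is not an argument, and the entire case division of your proof is unavailable for those genera.
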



The following result provides a list of known and useful properties of automorphism groups of algebraic curves in positive characteristic.

\begin{result} \label{list}
Let $\cX$ be an algebraic curve of genus $g \geq 2$ defined over a field of characteristic $p \geq 3$.
Let $G$ be an automorphism group of $\cX$.
\begin{enumerate}
\item \em{\cite[Theorem 1]{Nak}} If $G$ is a $p$-group and $\gamma(\cX)>0$ then $|G| \leq p(g-1)/(p-2)$. If $\gamma(\cX)=0$ then $|G| \leq \max\{g, 4pg^2/(p-1)^2\}$.
\item \rm{\cite[Theorem 11.60 and Theorem 11.79]{HKT}} If $G$ is abelian then $|G| \leq 4g+4$.
\item \rm{\cite[Theorem 11.78]{HKT}} If $P \in \cX$ is such that the quotient curve $\cX/G_P^{(1)}$ is not rational then $|G_P^{(1)}| \leq g$.
\item \rm{\cite[Lemma 11.44 (e)]{HKT}} For $P \in \cX$, the stabilizer $G_P$ of $P$ in $G$ is a semidirect product $G_P=G_P^{(1)} \rtimes U$ where $U$ is a cyclic $p^\prime$-group and $G_P^{(1)}$ is the Sylow $p$-subgroup of $G_P$. In particular $G_P$ is solvable.
\item \rm{\cite[Theorem 11.14]{HKT}} If $\cX$ is rational and $G$ is cyclic and tame then $G$ has exactly two fixed points and no other short orbits on $\cX$.
\end{enumerate}
\end{result}

An essential ingredient from group theory that we will use in the proof of Theorem \ref{maint} is the complete list of finite $2$-transitive permutation groups, see \cite[Tables 7.3 and 7.4]{cameron}.

A well known theorem of Burnside \cite[Theorem 4.3]{cameron} states that every finite $2$-transitive group is either almost simple or affine. 
Finite affine $2$-transitive groups are those having an elementary abelian regular normal socle, while almost simple $2$-transitive groups are those having a simple socle $N$.

Tables \ref{table1} and \ref{table2} list finite 2-transitive groups according to the two aforementioned categories.
Recall that the degree of a $2$-transitive permutation group is the cardinality of the set on which the group acts $2$-transitively.

\begin{table}[h!]
  \begin{center}
    \caption{Affine 2-transitive groups}
    \label{table1}
    \begin{tabular}{l|c|c|r} 
     \textbf{Case} & \textbf{Degree} & \textbf{$\bf G_P$} & \textbf{Condition}\\
      \hline
     1. & $q^d$ & $SL(d,q) \leq G_P \leq \Gamma L(d,q)$ & \\
      2. & $q^{2d}$ & $Sp(d,q) \trianglelefteq G_P$ & $d \geq 2$\\
      3. & $q^6$ & $G_2(q) \trianglelefteq G_P$ & $q$ even\\
     4. & $q$ & $(2^{1+2} \rtimes 3)=SL(2,3) \trianglelefteq G_P$ & $q=5^2,7^2,{11}^2,23^2$\\
 5. & $q$ & $2^{1+4} \trianglelefteq G_P$ & $q=3^4$\\
 6. & $q$ & $SL(2,5) \trianglelefteq G_P$ & $q={11}^2,{19}^2,{29}^2,{59}^2$\\
7. & $2^4$ & $A_6$& \\
8. &  $2^4$ & $A_7$ & \\
 9. & $2^6$ & $PSU(3,3)$ & \\
 10. & $3^6$ & $SL(2,12)$ & \\
\end{tabular}
  \end{center}
\end{table}

\begin{table}[h!]
  \begin{center}
    \caption{Almost Simple 2-transitive groups}
    \label{table2}
    \begin{tabular}{l|c|c|c|r} 
      \textbf{Case} & \textbf{Degree} & \textbf{Condition} & $ \bf N$ & $ \bf max|G/N|$\\
      \hline
1. &   $ n$ & $n \geq 5$ & $A_n$ & $2$ \\
2. & $ (q^d-1)/(q-1)$ & $d \geq 2$, $(d,q) \ne (2,2),(2,3)$ & $PSL(d,q)$ & $(d,q-1)$ \\
3. & $2^{2d-1}+2^{d-1}$ & $d \geq 3$ & $Sp(2d,2)$ & $1$ \\
4. & $2^{2d-1}-2^{d-1}$ & $d \geq 3$ & $Sp(2d,2)$ & $1$ \\
5. & $q^3+1$ & $q \geq 3$ & $PSU(3,q)$ & $(3,q+1)$ \\
6. & $q^2+1$ & $q=2^{2d+1}>2$ & $Sz(q)$ & $2d+1$ \\
7. & $q^3+1$ & $q=3^{2d+1}>3$ & $Ree(q)$ & $2d+1$ \\
8. & $11$ & & $PSL(2,11)$ & $1$ \\
9. & $11$ & & $M_{11}$ & $1$ \\
10. & $12$ &  & $M_{11}$ & $1$ \\
11. & $12$ &  & $M_{12}$ & $1$ \\
12. & $15$ & & $A_7$ & $1$ \\
13. & $22$ &  & $M_{22}$ & $2$ \\
14. & $23$ & & $M_{23}$ & $1$ \\
15. & $24$ & & $M_{24}$ & $1$ \\
16. & $28$ &  & $PSL(2,8)$ & $3$ \\
17. & $176$ & & $HS$ & $1$ \\
18. & $276$ & & $C_{o_3}$ & $1$ \\
\end{tabular}
  \end{center}
\end{table}

The list of finite $2$-transitive permutation groups can be refined if the stabilizer of two points is cyclic.

\begin{theorem} \label{KOS2t} \rm{\cite[Theorem 1.1]{KOS}}
Let $G$ be a finite, $2$-transitive permutation group on a set $\cX$. Suppose that the stabilizer $G_{P,Q}$ of two distinct points $P,Q \in \cX$ is cyclic, and that $G$ has no regular normal subgroups. Then $G$ is one of the following groups in its usual $2$-transitive permutation representation: $PSL(2, q)$, $PGL(2, q)$, $Sz(q)$, $PSU(3, q)$, $PGU(3, q)$ or a group of Ree type.
\end{theorem}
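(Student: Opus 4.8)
The plan is to reduce to the almost simple case and then inspect the classification of finite $2$-transitive groups recalled in Tables \ref{table1} and \ref{table2} line by line. By Burnside's theorem $G$ is either affine or almost simple. Every affine $2$-transitive group has an elementary abelian regular normal socle, and in particular a regular normal subgroup; since $G$ is assumed to have none, $G$ cannot be affine. Hence $G$ is almost simple, with nonabelian simple socle $N\trianglelefteq G\leq \Aut(N)$, so $G$ appears among the lines of Table \ref{table2}. It therefore suffices to decide, for each such line, whether the two-point stabilizer $G_{P,Q}=(G_P)_Q$ can be cyclic, bearing in mind that by $2$-transitivity $G_P$ is transitive on the remaining points, so $G_{P,Q}$ is a point stabilizer of that induced action.

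For the surviving families I would exhibit $G_{P,Q}$ directly as a maximal torus. In the natural action of $PSL(2,q)$ on the projective line the one-point stabilizer is a Borel subgroup, a split extension of a normal $p$-subgroup by a cyclic torus, and that torus is exactly $G_{P,Q}$, of order $(q-1)/\gcd(2,q-1)$; adjoining the diagonal automorphism gives $PGL(2,q)$ with cyclic two-point stabilizer of order $q-1$. The same pattern holds for $PSU(3,q)$ and $PGU(3,q)$ on the Hermitian unital of $q^3+1$ points, for $Sz(q)$ on its $q^2+1$ points, and for the Ree groups on $q^3+1$ points: in each case $G_P$ is a split extension of a $p$-group by a cyclic torus, and the torus is precisely $G_{P,Q}$. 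These are exactly the groups in the conclusion.

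The bulk of the work, and the step I expect to be the main obstacle, is eliminating all the remaining lines. The higher-rank families cause no trouble at the level of the socle: for $A_n$ with $n\geq 7$ the two-point stabilizer contains $A_{n-2}$, which is non-abelian; for $PSL(d,q)$ with $d\geq 3$, for the symplectic groups $Sp(2d,2)$, and for the sporadic lines ($M_{11},M_{12},M_{22},M_{23},M_{24},HS,Co_3$, the degree-$11$ action of $PSL(2,11)$, the degree-$15$ action of $A_7$, and the degree-$28$ action of $PSL(2,8)$) the point and two-point stabilizers are sections of parabolics or explicitly known maximal subgroups whose orders involve incompatible prime powers or which contain non-cyclic sections, so they are never cyclic; the two small cases are handled by exceptional isomorphisms, with $A_5\cong PSL(2,4)$ already lying in the $PSL(2,q)$ family and $A_6$ in its degree-$6$ action having non-cyclic two-point stabilizer $A_4$. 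The genuinely delicate point is the treatment of the almost simple extensions: one must show that enlarging the socle by a field automorphism destroys cyclicity of $G_{P,Q}$, while only the diagonal automorphism preserves it. Concretely, over $\mathbb{F}_q$ with $q=p^f$ the Frobenius acts non-trivially on the torus, so the two-point stabilizer of $P\Gamma L(2,q)$ is a non-abelian semidirect product $C_{q-1}\rtimes C_f$ rather than a cyclic group, and the analogous computation rules out the field-automorphism extensions of $PSU(3,q)$, $Sz(q)$ and the Ree groups, leaving precisely $PSL(2,q)$, $PGL(2,q)$, $PSU(3,q)$, $PGU(3,q)$, $Sz(q)$ and the Ree groups. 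Controlling these extensions — identifying how each outer automorphism acts on the cyclic torus $G_{P,Q}\cap N$ and checking whether the resulting extension can be cyclic — is where essentially all of the case-work concentrates.
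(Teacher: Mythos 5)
You should first be aware that the paper does not prove this statement at all: it is imported verbatim, with citation, as Theorem~1.1 of Kantor--O'Nan--Seitz \cite{KOS}, whose original 1972 proof is a long, classification-free group-theoretic analysis (it predates the classification of finite simple groups; results of this type were inputs to that programme, not consequences of it). Your proposal instead deduces the statement from the CFSG-based classification of finite $2$-transitive groups recorded in Tables~\ref{table1} and \ref{table2}. That is not circular, so it is an admissible modern verification, but it is a genuinely different and much heavier route: you are deriving a pre-classification theorem from machinery that subsumes it, and the entire burden then shifts to the line-by-line elimination, which is exactly where your sketch is weakest.

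The case analysis has one concrete error and one gap. The error is at Case~16 of Table~\ref{table2} (degree $28$, socle $PSL(2,8)$), which you claim to eliminate. Since $|PSL(2,8)|=504<28\cdot 27$, the socle itself is not $2$-transitive of degree $28$; the unique $2$-transitive group on that line is the full extension $P\Gamma L(2,8)\cong {}^2G_2(3)$, of order $1512=28\cdot 27\cdot 2$, whose two-point stabilizer has order $2$ and is therefore cyclic. This group cannot be excluded: it survives, and it is precisely the smallest group of Ree type, which is why the conclusion of the theorem says ``a group of Ree type'' rather than $Ree(q)$ with $q>3$ as in line~7 of Table~\ref{table2}. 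The same example refutes your guiding principle that adjoining a field automorphism always destroys cyclicity of $G_{P,Q}$: that principle is valid for the standard rank-one actions (projective line, Hermitian unital, Suzuki and Ree ovoids), but not for every line of the table, because not every line is such an action. The gap is in your dichotomy ``diagonal preserves, field destroys'' for the almost simple extensions of $PSL(2,q)$: when $q$ is a square there are also the mixed extensions generated over $PSL(2,q)$ by the product of a diagonal and a field automorphism, for instance $M_{10}$ with $PSL(2,9)<M_{10}<P\Gamma L(2,9)$, which is $2$-transitive on $10$ points with two-point stabilizer the quaternion group $Q_8$. Such groups are neither $PGL(2,q)$ nor field-automorphism extensions, and they must be ruled out by a separate computation (non-cyclicity of $Q_8$, in that instance). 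None of this threatens the truth of the statement, which stands on \cite{KOS}; but as a self-contained proof your elimination is incomplete, and at Case~16 it is wrong.
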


\begin{theorem} \label{BWrn} \rm{\cite[Theorem 1.7.6]{BW}}
Let $G$ be a $2$-transitive group on a set $O$. If $G$ has a regular normal subgroup $N$, then $N$ is an elementary abelian $d$-group where $d$ is a prime and $|O|=d^n$ for some $n \geq 1$.
\end{theorem}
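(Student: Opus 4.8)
The plan is to recognize that a regular normal subgroup lets one replace the abstract $2$-transitive action by the conjugation action of a point stabilizer, and then to exploit transitivity to force $N$ to be elementary abelian. Since $N$ is regular and $G$ is $2$-transitive, $|O|=|N|\geq 2$, so $N$ is nontrivial. Fix $P\in O$ and write $H=G_P$. Regularity of $N$ means the evaluation map $n\mapsto n(P)$ is a bijection $N\to O$ carrying $1$ to $P$, and I would use it to identify $O$ with $N$. As $N$ acts sharply transitively, the only element of $N$ fixing $P$ is the identity, so $N\cap H=\{1\}$; combined with $|G|=|O|\,|H|=|N|\,|H|$ this gives $NH=G$, whence $G=N\rtimes H$ because $N$ is normal.

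Next I would check that under this identification the permutation action of $H$ on $O$ is exactly the conjugation action of $H$ on $N$: for $h\in H$ and $n\in N$ one computes $h\bigl(n(P)\bigr)=\bigl(hnh^{-1}\bigr)\bigl(h(P)\bigr)=\bigl(hnh^{-1}\bigr)(P)$, so $h$ sends the point labelled $n$ to the point labelled $hnh^{-1}$. Consequently the $2$-transitivity of $G$ (equivalently, transitivity of $H$ on $O\setminus\{P\}$) says precisely that $H$ acts transitively by conjugation on $N\setminus\{1\}$. In particular all non-identity elements of $N$ are conjugate in $G$ and hence share a common order; by Cauchy's theorem that order is a prime $d$, so $N$ is a nontrivial $d$-group. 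Its center $Z(N)$ is therefore nontrivial, and being characteristic in $N$ it is invariant under conjugation by $H$. But the $H$-orbits on $N$ are just $\{1\}$ and $N\setminus\{1\}$, so the only $H$-invariant subgroups are $\{1\}$ and $N$; since $Z(N)\neq\{1\}$ we conclude $Z(N)=N$, i.e. $N$ is abelian.

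Finally, an abelian group in which every non-identity element has the same prime order $d$ is elementary abelian, so $N\cong(\mathbb{Z}/d\mathbb{Z})^n$ for some $n\geq 1$ and $|O|=|N|=d^n$, as claimed. I expect the only delicate point to be the bookkeeping in the correspondence between the two actions; once the conjugation picture is established, the argument is mechanical, the key structural inputs being transitivity of $H$ on $N\setminus\{1\}$ and the fact that a nontrivial $p$-group has nontrivial center.
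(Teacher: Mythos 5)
Your proof is correct, and there is nothing in the paper to compare it against: the paper states this result as a quoted theorem (\cite[Theorem 1.7.6]{BW}) and gives no proof of its own. Your argument is the standard one for this classical fact: identify $O$ with $N$ via $n \mapsto n(P)$, observe that the stabilizer $H = G_P$ then acts on $N$ by conjugation, so $2$-transitivity forces $H$ to be transitive on $N \setminus \{1\}$; all non-identity elements then share a common order, which Cauchy's theorem forces to be a prime $d$ (and likewise forces $|N|$ to be a power of $d$); finally the nontrivial center $Z(N)$, being $H$-invariant and hence a union of the orbits $\{1\}$ and $N \setminus \{1\}$, must equal $N$, so $N$ is abelian of exponent $d$, i.e.\ elementary abelian with $|O| = |N| = d^n$. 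Every step checks out, including the key computation $h\bigl(n(P)\bigr) = \bigl(hnh^{-1}\bigr)(P)$ translating the point action into conjugation.
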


With all the ingredients introduced in Section 2 we can proceed with the proof of Theorem \ref{maint}. Doing so, we can assume that $G$ is an automorphism group of an algebraic curve $\cX$ of genus $g=g(\mathcal{X}) \geq 2$, $p$-rank $\gamma=\gamma(\mathcal{X})$, and such that $|G|>84(g-1)$. By Theorems 2.3 and 2.4 unless $|G| \leq 24g^2$, $G$ has either exactly one short orbit (Case 3 in Theorem 2.3) or exactly two short orbits, one tame and one non-tame (Case 4 in Theorem 2.4).
We analyze these two cases separately in Sections 3 and 4 respectively.

\section{$G$ satisfies Case $3$ of Theorem \ref{Hurwitz}}

In this section $G$ stands for an automorphism group of an algebraic curve $\cX$ of genus $g \geq 2$ defined over a field $\K$ of odd characteristic $p$ satisfying Case 3 of Theorem \ref{Hurwitz}.
We denote by $O$ the only short orbit of $G$ on $\cX$. 

We start with the following direct consequence of the Hurwitz genus formula.

\begin{lemma}\label{1o}
If an automorphism group $G$ of an algebraic curve $\cX$ of genus $g \geq 2$ has exactly one short orbit $O$ then the size of $O$ divides $2g-2$. 
In particular this holds true whenever $G$ satisfies Case 3 in Theorem \ref{Hurwitz}.
\end{lemma}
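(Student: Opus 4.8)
The plan is to apply the Hurwitz genus formula \eqref{eq1} directly, exploiting the fact that $G$ has a rational quotient with exactly one short orbit. Since $G$ satisfies Case 3 of Theorem \ref{Hurwitz} (or more generally has exactly one short orbit), the fixed field $\K(\cX)^G$ is rational, so $\bar{g}=0$ and the formula reads $2g-2 = -2|G| + \sum_{P \in \cX} d_P$. The contributions $d_P$ are nonzero only at ramification points, which are precisely the points of the single short orbit $O$; all other points have trivial stabilizer.

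First I would observe that every point $P$ in the short orbit $O$ has a stabilizer $G_P$ of the same order, since the points of a $G$-orbit are permuted transitively and their stabilizers are conjugate. Consequently all the higher ramification groups $G_P^{(i)}$ have the same orders along $O$, so the different exponent $d_P$ from \eqref{eq1bis} is a constant, say $d$, as $P$ ranges over $O$. Writing $|O| = \ell$ for the size of the short orbit, the genus formula becomes
\begin{equation*}
2g-2 = -2|G| + \ell \cdot d.
\end{equation*}
The orbit size satisfies $\ell = |G|/|G_P|$, so $|G| = \ell \cdot |G_P|$ and hence $-2|G| = -2\ell |G_P|$ is a multiple of $\ell$. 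Therefore the entire right-hand side is divisible by $\ell$, which forces $\ell \mid (2g-2)$.

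For the general statement asserting only that $G$ has exactly one short orbit, I would note that having a single short orbit already implies the quotient has no other branch points; whether $\bar g = 0$ must be justified. If one does not wish to assume rationality of the quotient, the argument still works: the genus formula gives $2g-2 = |G|(2\bar g - 2) + \ell d$, and since $\ell$ divides $|G|$, the term $|G|(2\bar g - 2)$ is again a multiple of $\ell$, so $\ell \mid (2g-2)$ regardless of $\bar g$. This makes the first sentence of the lemma independent of the Case 3 hypothesis, and the ``In particular'' clause follows because Case 3 of Theorem \ref{Hurwitz} is exactly the situation of a unique (non-tame) short orbit.

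I do not expect a serious obstacle here: the only point requiring care is verifying that $d_P$ is genuinely constant across the orbit, which follows from conjugacy of point stabilizers and the fact that ramification groups are preserved under the conjugation action. The argument is essentially a divisibility bookkeeping on the Hurwitz genus formula once one recognizes that all ramification is concentrated on the single orbit.
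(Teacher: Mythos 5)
Your proof is correct and follows essentially the same route as the paper's: apply the Hurwitz genus formula \eqref{eq1}, note that all ramification is concentrated on $O$ with a constant different exponent $d_P$ across the orbit (the paper asserts this; you justify it via conjugacy of stabilizers), and conclude by divisibility since $|O|$ divides $|G|$. Your unified treatment even subsumes the paper's separate tame/non-tame cases and, like the paper, never actually needs $\bar g = 0$.
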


\begin{proof}
Let $\bar{g}$ be the genus of the quotient curve $\cX/G$. If the stabilizer $G_P$ of a point $P \in O$ is tame, then the Hurwitz genus formula \eqref{eq1} reads as follows
$$2g-2 = |G|(2\bar{g}-2) +|G|-|O|.$$
As from the Orbit stabilizer Theorem $|G| =|G_P||O|$, the result follows. 

In the non-tame case, in Equation \eqref{eq1} we have $d_P = d_Q$  for $P,Q \in O$ and $d_R = 0$ for $R \not\in O$. Hence $|O|$ divides $\sum_{P \in \cX} d_P$, and the result follows as in the tame case from Equation \eqref{eq1}.
\end{proof}

We now move to the proof of Part 2 in Theorem \ref{maint}.

\begin{theorem} \label{main3}
If $|G|\geq 60g^2$ then $\gamma$ is congruent to zero modulo $p$.
\end{theorem}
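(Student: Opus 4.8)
The plan is to work with the group $G$ satisfying Case 3 of Theorem \ref{Hurwitz}, so $G$ has a single short orbit $O$, which is non-tame, and the quotient curve $\cX/G$ is rational. I would fix a point $P \in O$ and study the structure of the stabilizer $G_P = G_P^{(1)} \rtimes U$ given by part (4) of Result \ref{list}, where $G_P^{(1)}$ is a $p$-group and $U$ is a cyclic $p'$-group. The basic strategy is to play the lower bound $|G| \geq 60g^2$ against the Hurwitz genus formula \eqref{eq1} to extract strong numerical information, and then to translate the hypothesis $\gamma(\cX)\not\equiv 0 \pmod p$ into a contradiction. Since $\bar g = 0$ and there is exactly one short orbit, the genus formula reads
\begin{equation*}
2g-2 = -2|G| + |O|\, d_P,
\end{equation*}
with $d_P = \sum_{i\geq 0}(|G_P^{(i)}|-1)$, and $|O| = |G|/|G_P|$. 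This gives a clean relation between $g$, $|G|$, $|G_P|$ and the higher ramification filtration.

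The key contrapositive step I expect to carry out is this: if $\gamma(\cX)$ is not divisible by $p$, I would like to locate a $p$-subgroup whose order is too large relative to the bound in part (1) of Result \ref{list}, namely $|Q| \leq p(g-1)/(p-2)$ for any $p$-group $Q$ with $\gamma(\cX) > 0$. The idea is that the single non-tame orbit forces the full Sylow $p$-subgroup, or the point-stabilizer $p$-part $G_P^{(1)}$, to be very large when $|G| \geq 60g^2$: estimating $|O| \le 2g-2$ (the orbit size is bounded by the contribution in the genus formula, cf. Lemma \ref{1o}) together with $|G| = |O|\,|G_P| \geq 60g^2$ yields $|G_P| \geq 60g^2/(2g-2) > 30g$, whence $|G_P^{(1)}|$ is comparable to $g$ and in particular exceeds the Nakajima bound $p(g-1)/(p-2)$ once $g$ is large. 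If instead $\cX/G_P^{(1)}$ is not rational, part (3) of Result \ref{list} already caps $|G_P^{(1)}| \leq g$, which I would combine with the orbit-size estimate to pin down the situation tightly.

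From here the plan is to argue that the assumption $\gamma(\cX) > 0$ with $p \nmid \gamma(\cX)$ is incompatible with this large $p$-group. The cleanest route is via the Deuring–Shafarevich formula \eqref{eq2deuring} applied to the $p$-group $G_P^{(1)}$ (or to a suitable Sylow $p$-subgroup): reducing the identity $\gamma - 1 = |Q|(\bar\gamma - 1) + \sum_i(|Q| - \ell_i)$ modulo $p$ kills all terms divisible by $|Q|$, leaving a congruence that forces $\gamma \equiv 1 - (\text{number of short orbits of } Q) \pmod p$, or more precisely $\gamma \equiv 1 \pmod{?}$ after accounting for the orbit sizes $\ell_i$ that are prime to $p$. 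Because $\cX/Q$ must be rational (the quotient by the large $p$-group is forced to be $\P^1$ in this regime, as in Lemma \ref{terribile}), one gets $\bar\gamma = 0$ and the formula simplifies, producing a value of $\gamma$ that is $\equiv 0 \pmod p$ — contradicting $p \nmid \gamma$.

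The hard part, I expect, is controlling exactly which $p$-group to feed into the Deuring–Shafarevich argument and verifying that its quotient is rational with a controlled number of short orbits, since the formula is only available for $p$-groups and its output depends sensitively on the orbit structure $\ell_1,\dots,\ell_k$. The delicate bookkeeping will be to rule out the small-genus cases where the inequality $|G_P^{(1)}| > p(g-1)/(p-2)$ fails, and to ensure that the single-short-orbit hypothesis of Case 3 passes correctly to the normal $p$-subgroup so that $\bar\gamma = 0$ is guaranteed. I would handle the residual small-$g$ range by the crude bound $|G| \geq 60g^2$ against Result \ref{tameorder} and the fixed-point restriction of Result \ref{trivialintersectionset}, which together constrain the tame cyclic part $U$ and the action of elements of order $p$ tightly enough to finish.
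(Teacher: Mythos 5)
Your overall skeleton---bounding $|O|$ by $2g-2$ via Lemma \ref{1o}, using the orbit-stabilizer theorem to get $|G_P| > 30g$, and finishing with the Deuring--Shafarevich formula applied to $G_P^{(1)}$---matches the paper's proof in outline, but both routes you propose for the decisive step have genuine gaps. The primary route (violating Nakajima's bound) fails outright: from $|G_P| > 30g$ and $G_P = G_P^{(1)} \rtimes U$ you cannot conclude that $|G_P^{(1)}|$ exceeds $p(g-1)/(p-2)$, because Result \ref{tameorder} allows $|U|$ to be as large as $4g+2$, so the inequality only yields $|G_P^{(1)}| > 30g/(4g+2)$, a constant near $7.5$ rather than anything comparable to $g$. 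This is not a matter of "small-genus bookkeeping": the inequality $|G_P^{(1)}| > p(g-1)/(p-2)$ is simply not forced here for any $g$. Moreover, that route aims at the wrong conclusion: it would establish $\gamma = 0$, whereas the paper's Theorem \ref{second} shows that in this very regime (Case 3 with $|G| \geq 60g^2$) the $p$-rank is \emph{positive} and divisible by $p$; so an argument forcing $\gamma=0$ cannot be correct unless the hypothesis is vacuous.

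Your fallback via Deuring--Shafarevich is the paper's actual route, but you leave precisely the crucial step unresolved. Reducing \eqref{eq2deuring} modulo $p$ gives $\gamma \equiv 1 - f \pmod p$, where $f$ is the number of \emph{fixed points} of $G_P^{(1)}$ (short orbits of size $p^j$ with $j \geq 1$ contribute $0$ modulo $p$); rationality of $\cX/G_P^{(1)}$ contributes nothing to this congruence, since the term $|G_P^{(1)}|\bar\gamma$ is divisible by $p$ regardless. So the entire content of the proof is to show $f=1$: if $G_P^{(1)}$ had two fixed points and no other short orbit, you would get $\gamma \equiv -1 \pmod p$ instead. The paper settles this by applying Lemma \ref{terribile} to $H = G_P$ (legitimate because $|G_P| > 30(g-1)$, $G_P^{(1)}$ is a normal Sylow $p$-subgroup with cyclic complement $U$, and $d=p$ is assumed), concluding that either $\gamma = 0$ (which already satisfies the theorem) or $G_P^{(1)}$ has exactly two short orbits $\{P\}$ and $O_2$ with $|O_2| = p^i$; it then excludes $i=0$ by a counting argument: writing $|O| = 2 + h|G_P|$, either $h=0$, whence $|G| = 2|G_P| \leq 2p(g-1)(4g+2)/(p-2) \leq 30g^2$, or $h \geq 1$, whence $|G_P| < |O| \leq 2g-2$, both contradicting $|G| \geq 60g^2$ and $|G_P| \geq 30g$. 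You explicitly flag this orbit-structure control as "the hard part" but supply no argument for it, and you also leave untreated the degenerate case $O = \{P\}$, which the paper eliminates separately by lifting a second fixed point of $U$ from the rational quotient $\cY_1$ to manufacture a second short orbit of $G$, contradicting Case 3.
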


\begin{proof}
Let $P \in O$ and let $G_P$ be the stabilizer of $P$ in $G$.
From Item 4 of Result \ref{list}, we can write $G_P=G_P^{(1)} \rtimes U$ where $U$ is tame and cyclic. Denote by $\cY_1$ the quotient curve $\cX/G_P^{(1)}$. 
We distinguish two cases.
\begin{itemize}
\item \textbf{Case 1: $O=\{P\}$.} If $\cY_1$ is not rational then Item 3 of Result \ref{list} and Result \ref{tameorder} imply that $|G|=|G_P| \leq g(4g+2)<60g^2$, a contradiction.

 Hence $g(\cY_1)=0$. The factor group $U_1=G_P/G_P^{(1)} \cong U$ is a prime-to-$p$ cyclic automorphism group of $\cY_1$ fixing the point $P_1$ lying below $P$ in the cover $\cX| \cY_1$. 
If $|U|=1$ then a contradiction to $|G|<60g^2$ is obtained from Result 2.8 part 1. Hence we can assume $U_1$ non-trivial.
From  Item 5 of Result \ref{list} the group $U_1$ has another fixed point, say $Q_1$, on $\cY_1$. If $Q$ denotes a point of $\cX$ lying above $Q_1$ in $\cX|\cY_1$ then $U$ acts on the $G_P^{(1)}$-orbit $\Delta$ containing $Q$ and since $|U|$ is prime-to-$p$ we get that $U$ has at least another fixed point  on $\cX$, say $R$, which is also contained in $\Delta$. This implies that the $G$-orbit containing $R$ is short as the stabilizer of $R$ in $G$ is non-trivial. This is not possible since by hypothesis $O=\{P\}$ is the only short orbit of $G$. 
\item \textbf{Case 2: $O \supset \{P\}$.} From the Orbit stabilizer Theorem $|G|=|O||G_P|$. Also from  Lemma \ref{1o} we can write $2g-2=k|O|$ for some $k \geq 1$. Then
$$(2g-2)^2=k^2|O|^2=k^2 \frac{|G|^2}{|G_P|^2},$$
and hence since $|G|\geq 60g^2$,
$$|G_P|=k^2\frac{|G|^2}{|G_P|(2g-2)^2} = \frac{k^2|G|}{(2g-2)^2} \cdot \frac{|G|}{|G_P|}>\frac{|G|}{|G_P|}.$$
 
From $|O| \leq 2g-2$ we have
$$2g|G_P| > (2g-2)|G_P| \geq |O||G_P| = |G|\geq 60g^2$$
and hence $|G_P| \geq 30g>30(g-1)$.  From Lemma \ref{terribile}  either $\gamma(\cX)=0$ or $\gamma(\cX)>0$ and in the latter case $G_P^{(1)}$ has exactly two (non-tame) short orbits which are also the only short orbits of $G_P$.
Assume that $\gamma(\cX)>0$ and denote by  $O_1=\{P\}$ and $O_2$  the two short orbits of $G_P^{(1)}$ and $G_P$, with $|O_2|=p^i$ and $i \geq 0$.

If $i=0$ we have that $G_P^{(1)}$ and $G_P$ fix exactly another point $R \in \cX \setminus \{P\}$ and have no other short orbits.
Since $O_1$ and $O_2$ are contained in $O$, we can write $|O|=2+h|G_P|$ for some $h \geq 0$. If $h=0$ then $|O|=2$ and hence from Item 1 of Result \ref{list} and Result \ref{tameorder}, $|G|=|G_P||O| =2|G_P| \leq 2p(g-1)(4g+2)/(p-2) \leq 30g^2$, a contradiction. If $h \geq 1$ then $|G_P|<|O| \leq 2g-2$. Since $|G_P|\geq 30g$ we get a contradiction.

This shows that necessarily $i \geq 1$.
From the Deuring-Shafarevic formula \eqref{eq2deuring} applied to $G_P^{(1)}$ one has
$$\gamma-1=|G_p^{(1)}|(0-1)+|G_p^{(1)}|-1+|G_p^{(1)}|-p^i,$$
and hence $\gamma$ is congruent to zero modulo $p$.
\end{itemize}
\end{proof}

We now prove that actually the case $\gamma=0$ in Theorem \ref{main3} cannot occur.

\begin{theorem} \label{second}
 Let $\cX$ be an irreducible curve of genus $g \geq  2$. If $G$ is an automorphism group of $\cX$ satisfying Case 3 of Theorem \ref{Hurwitz} then either $|G| < 60g^2$ or $\gamma=\gamma(\cX)$ is positive and congruent to zero modulo $p$.
\end{theorem}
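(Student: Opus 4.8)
The plan is to combine Theorem~\ref{main3} with a ruling-out argument for the case $\gamma(\cX)=0$. Since Theorem~\ref{main3} already establishes that $|G|\geq 60g^2$ forces $\gamma(\cX)\equiv 0 \pmod p$, the only thing left to prove is that $\gamma(\cX)=0$ is impossible under Case~3 of Theorem~\ref{Hurwitz} when $|G|\geq 60g^2$. So I would assume for contradiction that $|G|\geq 60g^2$, that $G$ has exactly one (non-tame) short orbit $O$, and that $\gamma(\cX)=0$, and then derive a contradiction. The natural tool is Result~\ref{trivialintersectionset}: when $\gamma(\cX)=0$, every element of order $p$ has \emph{exactly one} fixed point on $\cX$.

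First I would fix $P\in O$ and write $G_P=G_P^{(1)}\rtimes U$ as in Item~4 of Result~\ref{list}. As in the proof of Theorem~\ref{main3}, the bound $|G|\geq 60g^2$ together with $|O|\mid 2g-2$ (Lemma~\ref{1o}) forces $|G_P|\geq 30g>30(g-1)$, putting us in position to invoke Lemma~\ref{terribile}. Under $\gamma(\cX)=0$, the relevant alternative of Lemma~\ref{terribile} is that $H=G_P$ \emph{fixes a point}; combined with the fact that $G_P$ already fixes $P$, and that $\cX/G_P^{(1)}$ is rational (shown as in Theorem~\ref{main3}, Case~1, since non-rationality would give $|G_P|\leq g(4g+2)<60g^2$), I expect to pin down the short-orbit structure of $G_P$ very tightly. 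The key consequence I want to extract is that a nontrivial element $\sigma\in G_P^{(1)}$ of order $p$ must have a second fixed point besides $P$: indeed $U$ normalizes $G_P^{(1)}$ and, being a tame cyclic group acting on the rational quotient $\cX/G_P^{(1)}$, it has a second fixed point there (Item~5 of Result~\ref{list}), which lifts to a $G_P^{(1)}$-orbit on which $U$ acts, producing a fixed point $R\neq P$ of an element of $p$-power order.

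The contradiction should then come from Result~\ref{trivialintersectionset}. If $\gamma(\cX)=0$, every order-$p$ element fixes exactly one point, so the entire Sylow $p$-subgroup $G_P^{(1)}$ (each of whose nontrivial elements of order $p$ fixes only $P$, since they all fix $P$) cannot fix any second point $R$; yet the argument above manufactures exactly such a second fixed point of a $p$-element, or forces a second short orbit of $G$, contradicting either Result~\ref{trivialintersectionset} or the Case~3 hypothesis that $O$ is the \emph{unique} short orbit. More precisely, I would argue that $\gamma(\cX)=0$ makes $G_P^{(1)}$ act with a single fixed point $P$, so that the point $R$ produced above lies in a $G$-orbit whose stabilizer is nontrivial, hence short and distinct from $O$ --- impossible in Case~3. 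This mirrors exactly the contradiction derived in Case~1 of the proof of Theorem~\ref{main3}, now run in the setting $O\supset\{P\}$ with $\gamma=0$.

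The main obstacle I anticipate is bookkeeping in the case $|O_2|=p^i$ with $i\geq 1$, i.e.\ when $G_P^{(1)}$ genuinely has a non-tame second short orbit rather than a single extra fixed point. In that regime the naive ``second fixed point'' argument must be replaced by a more careful count: I would apply the Deuring--Shafarevich formula \eqref{eq2deuring} to $G_P^{(1)}$ on the rational quotient and show that $\gamma=0$ forces the orbit sizes $\ell_i$ to satisfy an equation incompatible with $|G_P^{(1)}|\geq 30g/|U|$ and $|U|\leq 4g+2$ (Result~\ref{tameorder}), thereby violating either the genus bound or Result~\ref{trivialintersectionset}. The delicate point is ensuring the estimates on $|G_P^{(1)}|$ and $|U|$ are strong enough to close the gap in every subcase; I expect this to require the quadratic lower bound $|G_P|\geq 30g$ in an essential way, together with Nakajima's bound (Item~1 of Result~\ref{list}) controlling $|G_P^{(1)}|$ when $\gamma(\cX)=0$.
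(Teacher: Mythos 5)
Your reduction---invoke Theorem~\ref{main3}, then rule out $\gamma(\cX)=0$ by contradiction---is exactly the paper's strategy, and your preliminary steps ($|G_P|\geq 30g$ and rationality of $\cX/G_P^{(1)}$ via Lemma~\ref{terribile}) are sound. The gap is in the contradiction mechanism itself. First, the point $R\neq P$ you manufacture is fixed by a non-trivial \emph{prime-to-$p$} element (a conjugate of a generator of $U$), not by ``an element of $p$-power order'': under $\gamma(\cX)=0$, Result~\ref{trivialintersectionset} forces every $G_P^{(1)}$-orbit other than $\{P\}$ to be long (an order-$p$ element stabilizing $R$ would fix both $P$ and $R$), so no $p$-element fixes $R$, and no contradiction with Result~\ref{trivialintersectionset} can arise this way. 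Second, and more seriously, your fallback claim that the $G$-orbit of $R$ is ``short and distinct from $O$'' does not follow: that orbit is short, but nothing prevents it from being $O$ itself. Since the remaining case is $|O|>1$ (the case $O=\{P\}$ is settled as in Case~1 of Theorem~\ref{main3} without using $\gamma=0$), the points of $O\setminus\{P\}$ are precisely where such $U$-fixed points are expected to lie, so uniqueness of the short orbit is not violated. Third, the Deuring--Shafarevich fallback cannot close the gap: with $\gamma=0$ and $\cX/G_P^{(1)}$ rational, formula~\eqref{eq2deuring} reads $-1=-|G_P^{(1)}|+(|G_P^{(1)}|-1)+\sum_{i\geq 2}(|G_P^{(1)}|-\ell_i)$, which merely restates that $\{P\}$ is the only short orbit of $G_P^{(1)}$; this is perfectly consistent with $|G_P|\geq 30g$ and $|U|\leq 4g+2$, so no numerical incompatibility appears. (You also never treat $|U|=1$ in the case $|O|>1$, where no point $R$ with non-trivial tame stabilizer exists at all.)

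What the paper does instead, and what your proposal is missing, is a quantitative double count with the Hurwitz genus formula. Applying \eqref{eq1} to $G_P^{(1)}$, whose only short orbit is $\{P\}$ when $\gamma=0$, gives
\begin{equation*}
2g-2=-2|G_P^{(1)}|+2\bigl(|G_P^{(1)}|-1\bigr)+\sum_{i\geq 2}\bigl(|G_P^{(i)}|-1\bigr),
\quad\text{i.e.}\quad \sum_{i\geq 2}\bigl(|G_P^{(i)}|-1\bigr)=2g.
\end{equation*}
Substituting this into \eqref{eq1} applied to $G$ (where $\cX/G$ is rational and the whole different is concentrated on $O$) yields the key identity $|O|\,|G_P^{(1)}|\,(|U|-1)=(|O|-1)(2g-2)$. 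If $|U|=1$ this forces $|O|=1$, whence $|G|=|G_P^{(1)}|\leq 4g^2$ by Nakajima's bound (Item~1 of Result~\ref{list}); if $|U|\geq 2$ it forces $|G_P|\leq 2|G_P^{(1)}|(|U|-1)<2(2g-2)$, whence $|G|=|O||G_P|<2(2g-2)^2<8g^2$. Either way $|G|<60g^2$, the desired contradiction. This ramification-theoretic identity, rather than a structural ``extra short orbit'' argument, is the essential missing idea; without it your proof cannot be completed along the lines you describe.
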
  

\begin{proof}
Suppose that $G$ satisfies Case 3 in Theorem \ref{Hurwitz} and $|G| \geq 60g^2$. For $P \in O$ we can write $G_P=G_P^{(1)} \rtimes U$, where $U$ is tame and cyclic from Item 4 of Result \ref{list}. Then from Theorem \ref{main3} we have either $\gamma(\cX)=0$ or $\gamma(\cX)$ positive and congruent to zero modulo $p$. Suppose by contradiction that $\gamma(\cX)=0$.
Recall that $\cX/G_P^{(1)}$ is rational from Lemma \ref{terribile} as $|G_P| \geq 30g$.

Write $|G_P^{(1)}|=p^h$ with $h \geq 1$. By the Hurwitz genus formula applied with respect to $G_P^{(1)}$ one has,

$$2g-2=-2|G_P^{(1)}|+d_P = -2|G_P^{(1)}| + 2(|G_P^{(1)}| -1)+\sum_{i\geq 2}(|G_P^{(i)}|-1)=\sum_{i\geq 2}(|G_P^{(i)}|-1)-2,$$
since $G_P^{(1)}$ has exactly $\{P\}$ as its unique short orbit from Result \ref{trivialintersectionset}.
On the other hand, recalling that $\cX/G$ is rational, the Hurwitz genus formula applied with respect to $G$ gives
$$2g-2=-2|G|+|O|\bigg(|G_P|-1+|G_P^{(1)}|-1+\sum_{i\geq 2}(|G_P^{(i)}|-1)\bigg)$$
and hence
$$2g-2=-|G|+|O|(|G_P^{(1)}|+2g-2).$$
Therefore,
\begin{equation} \label{p1}
|O|(|G_P|-|G_P^{(1)}|)=|O||G_P^{(1)}|(|U|-1)=(|O|-1)(2g-2)<(2g-2)|O|.
\end{equation}
If $|U|=1$ then $G_P=G_P^{(1)}$ and $|O||G_P^{(1)}|(|U|-1)=(|O|-1)(2g-2)$ implies that $|O|=1$ since $g \geq 2$. Hence $|G| = |G_P^{(1)}| \leq 4g^2$ from  Item 1 of Result \ref{list}; a contradiction.

Since $|U| \geq 2$ we get from Equation \eqref{p1},
$$\frac{|G_P|}{2} \leq |G_P^{(1)}|(|U|-1)<2g-2$$
and $|G|=|O||G_P| <(2g-2)^2<4g^2$; a contradiction.
\end{proof}

This proves Item 2 in Theorem \ref{maint}. Our next goal is to show that up to increasing the value of the constant $c=60$ one can give a complete answer to Open Problem \ref{op} when $G$ satisfies Case 3 in Theorem \ref{Hurwitz}. This will prove Item 3 in Theorem \ref{maint} and show that curves with at least $336g^2$ automorphism have a very precise short orbits structure.

\begin{proposition}
Let $\cX$ be an irreducible curve of genus $g \geq  2$. If $G$ is an automorphism group of $\cX$ satisfying Case 3 of Theorem \ref{Hurwitz}. Then $|G| \leq 336 g^2$.
\end{proposition}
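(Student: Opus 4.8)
The plan is to split on the $p$-rank $\gamma=\gamma(\cX)$. If $\gamma=0$, Theorem \ref{second} gives at once $|G|<60g^2\le 336g^2$, so the real work is the case $\gamma>0$. There I would estimate $|G|=|O|\cdot|G_P|$, where $O$ is the unique non-tame short orbit and $P\in O$, writing $G_P=G_P^{(1)}\rtimes U$ as in Item 4 of Result \ref{list}. The naive bounds $|O|\le 2g-2$ (Lemma \ref{1o}), $|U|\le 4g+2$ (Result \ref{tameorder}) and $|G_P^{(1)}|\le 3(g-1)$ (Nakajima) only combine to a cubic estimate of order $24g^3$; the decisive idea is that the Hurwitz genus formula couples $|O|$ and $|G_P|$ so tightly that the order drops to quadratic.

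Concretely, set $p^h=|G_P^{(1)}|$ and $W=\sum_{j\ge 1}(|G_P^{(j)}|-1)$, the wild part of the different at $P$. Since $\cX/G$ is rational and $O$ is the only ramified orbit, applying \eqref{eq1}--\eqref{eq1bis} to $G$ gives $2g-2=-2|G|+|O|\bigl((|G_P|-1)+W\bigr)$; using $|G|=|O||G_P|$ this rearranges to
\[
W=|G_P|+1+\frac{2g-2}{|O|}\ge |G_P|+2,
\]
the last inequality because $|O|\mid 2g-2$ by Lemma \ref{1o}. On the other hand, the ramification groups $G_P^{(j)}$ with $j\ge 1$ are exactly the ramification groups of the $p$-group $G_P^{(1)}$, so applying \eqref{eq1} to $G_P^{(1)}$, keeping only the contribution at $P$ and using that the quotient has non-negative genus, yields $2g-2\ge -2p^h+(p^h-1)+W$, that is $W\le 2g-1+p^h$. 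Comparing the two estimates gives the key inequality $|G_P|\le 2g-3+p^h$, which is now \emph{linear} in $g$ once $p^h$ is controlled — this is the step that converts the cubic bound into a quadratic one.

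It then remains to bound $p^h$. Because $O$ is non-tame, $G_P^{(1)}$ is a non-trivial $p$-group, and since $\gamma>0$ the first part of Item 1 of Result \ref{list} (Nakajima) gives $p^h\le p(g-1)/(p-2)\le 3(g-1)$. Hence $|G_P|\le 5g-6$, and combining with $|O|\le 2g-2$ produces $|G|=|O||G_P|\le (2g-2)(5g-6)<10g^2$. Together with the case $\gamma=0$ this shows $|G|<60g^2$, so the claimed bound $336g^2$ holds with considerable room to spare. The main obstacle is precisely the coupling step: without the genus-formula comparison one is left with the product $|O|\cdot|U|\cdot p^h$ of three factors each of linear size, i.e. a cubic bound, so the essential content is to prove that $|G_P|$ itself (not merely its $p$-part) is linear in $g$; and for this the hypothesis $\gamma>0$ is indispensable, since it is exactly what keeps $p^h$ linear through Nakajima's theorem.
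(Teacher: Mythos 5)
Your proof is correct, and it takes a genuinely different -- and much shorter -- route than the paper's. Both arguments dispose of $\gamma=0$ by quoting Theorem \ref{second}, so the comparison concerns $\gamma>0$. There the paper takes a counterexample of minimal genus, uses Lemma \ref{terribile} to pin down the short orbits of $G_P$, deduces that $G$ acts $2$-transitively on $O$, and then eliminates case by case every group in the classification of finite $2$-transitive permutation groups (Tables \ref{table1} and \ref{table2}), with auxiliary kernel and quotient-curve arguments resting on Lemma \ref{terribile2} and the minimality of $g$. You instead sandwich the wild part $W=\sum_{j\ge 1}(|G_P^{(j)}|-1)$ of the different between two applications of the Hurwitz genus formula: applied to $G$ (using that $\cX/G$ is rational and $O$ is the only ramified orbit, both part of Case 3 of Theorem \ref{Hurwitz}) it gives $W=|G_P|+1+(2g-2)/|O|\ge |G_P|+2$, while applied to $G_P^{(1)}$ (whose ramification filtration at $P$ is $G_P^{(1)}\supseteq G_P^{(1)}\supseteq G_P^{(2)}\supseteq\cdots$, since ramification groups of a subgroup are obtained by intersection and $G_P^{(j)}\subseteq G_P^{(1)}$ for $j\ge 1$) it gives $W\le 2g-1+|G_P^{(1)}|$; comparing yields $|G_P|\le 2g-3+|G_P^{(1)}|$, and Nakajima's bound -- legitimately invoked exactly because $\gamma>0$ -- gives $|G_P^{(1)}|\le 3(g-1)$, hence $|G_P|\le 5g-6$ and $|G|=|O|\,|G_P|\le (2g-2)(5g-6)<10g^2$. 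I checked each step (equality of differents at conjugate points, the intersection property of ramification groups, the direction of both inequalities) and found no gap; the facts you use beyond the paper's quoted results are standard. As for what each approach buys: the paper's classification machinery generates structural information (the $2$-transitive action, the candidates for $G/K$) of the same kind it reuses in Section 4, whereas your argument is elementary, avoids the classification of $2$-transitive groups and the induction on the genus entirely, and proves the strictly stronger bound $|G|<60g^2$ in Case 3 -- which, if correct as it appears to be, even makes the hypothesis of Item 3 of Theorem \ref{maint} unsatisfiable, i.e.\ that item becomes vacuously true and the constant $336$ in the Proposition is far from optimal.
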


\begin{proof}
Assume by contradiction that there exists an algebraic curve $\cX$ together with an automorphism group $G$ satisfying Case 3 in Theorem \ref{Hurwitz} with $|G| >336 g^2$. We choose $\cX$ to be of minimal genus, that is, if $g^\prime < g$ then an algebraic curve together with an automorphism group $G^\prime$ satisfying Case 3 in Theorem \ref{Hurwitz} with $|G^\prime| >336 {g^\prime}^2$ does not exist. 

The first part of the proof is similar to the one of Theorem \ref{main3}. Let $P \in O$, where $O$ denotes the only short orbit of $G$. Let $G_P^{(1)}$ be the Sylow $p$-subgroup of the stabilizer $G_P$ of $P$ in $G$ and denote by $\cY_1$ the quotient curve $\cX/G_P^{(1)}$. 
From Item 4 of Result \ref{list}, we can write $G_P=G_P^{(1)} \rtimes U$ where $U$ is tame and cyclic.
We distinguish two subcases.
\begin{itemize}
\item \textbf{Case 1: $O=\{P\}$.} If $\cY_1$ is not rational then Item 3 of Result \ref{list} and Result \ref{tameorder} imply that $|G|=|G_P| \leq g(4g+2)<336g^2$, a contradiction.

 Hence $g(\cY_1)=0$. The factor group $U_1=G_P/G_P^{(1)} \cong U$ is a prime-to-$p$ cyclic automorphism group of $\cY_1$ fixing the point $P_1$ lying below $P$ in the cover $\cX| \cY_1$. As before we can assume $|U|>1$. From Item 5 of Result \ref{list}, the group $U_1$ has another fixed point, say $Q_1$, on $\cY_1$. If $Q$ denotes a point of $\cX$ lying above $Q_1$ in $\cX|\cY_1$ then $U$ acts on the $G_P^{(1)}$-orbit $\Delta$ containing $Q$ and since $|U|$ is prime-to-$p$ we get that $U$ has at least another fixed point, say $R$, on $\cX$ which is contained in $\Delta$. This implies that the $G$-orbit containing $R$ is short as the stabilizer of $R$ in $G$ is non-trivial. This is not possible since by hypothesis $O=\{P\}$ is the only short orbit of $G$. 
\item \textbf{Case 2: $O \supset \{P\}$.} From the Orbit stabilizer Theorem $|G|=|O||G_P|$ and $|O|$ divides $2g-2$ from Lemma \ref{1o}. Write $2g-2=k|O|$ for some $k \geq 1$. Then
$$(2g-2)^2=k^2|O|^2=k^2 \frac{|G|^2}{|G_P|^2},$$
and hence since $|G|>336g^2$,
$$|G_P|=k^2\frac{|G|^2}{|G_P|(2g-2)^2} = \frac{k^2|G|}{(2g-2)^2} \cdot \frac{|G|}{|G_P|}>\frac{|G|}{|G_P|}.$$

From $|O| \leq 2g-2$ we have
$$2g|G_P| > |G_P|(2g-2) \geq |G_P||O| = |G|>336g^2$$
and hence $|G_P|>30(g-1)$.  From Lemma \ref{terribile}  either $\gamma(\cX)=0$ or $\gamma(\cX)>0$ and in the latter case $G_P^{(1)}$ has exactly two (non-tame) short orbits, and they are also the only short orbits of $G_P$.
The claim follows by showing that the case $\gamma(\cX)>0$ cannot occur since Theorem \ref{second} gives that the case $\gamma(\cX)=0$ cannot occur either. 

Assume by contradiction that $\gamma(\cX)>0$ and denote by  $O_1=\{P\}$ and $O_2$, with $|O_2|=p^i$, $i \geq 0$ the two short orbits of $G_P^{(1)}$ and $G_P$.
We have the following two possibilities.
\begin{itemize}

\item \textbf{Subcase 2.I: $O \ne O_1 \cup O_2$.} Let $R \in O \setminus (O_1 \cup O_2)$. Then the orbit $\Delta$ of $G_P$ containing $R$ is long and $\Delta \subset O$. Hence $|G_P|=|\Delta| <|O| \leq (2g-2)$. 
This implies that $|G|=|G_P||O| < (2g-2)(2g-2)<4g^2$, a contradiction.

\item \textbf{Subcase 2.II: $O = O_1 \cup O_2$.} Denote by $K$ the kernel of the permutation representation of $G$ on $O$. Since $O=\{P\} \cup O_2$, $G$ acts $2$-transitively on $O$. Then $G/K$ is isomorphic to one of the finite $2$-transitive permutation groups listed in Tables \ref{table1} and \ref{table2}. The claim will follow with a case-by-case analysis. 
\begin{itemize}
\item Suppose that $\bar G=G/K$ is one of the finite affine $2$-transitive groups in Table \ref{table1}. The first three cases cannot occur since $ {\bar G}_P$ is solvable. This follows from the fact that $G_P$ is solvable from Item 4 of Result \ref{list} and hence every factor group of $G_P$ is solvable as well. Cases 4, 5, 6 and 10 can be excluded as $|O|$ must be even. In the remaining cases either $|O|=2^4$ or $|O| = 2^6$. Since $|O|-1$ must be a prime power both the cases can be excluded.

\item Suppose that $\bar G=G/K$ is one of the finite almost simple $2$-transitive groups in Table \ref{table2}. 

Cases 8, 9, 12 and 14 can be excluded as $|O|$ must be even, while Cases 13, 17 and 18 cannot occur since $|O|-1$ is a prime power. 

Cases 10, 11, 15 and 16 can be excluded as follows. Note that $(|O|,p) =(12,11),(24,23),(28,3)$. 
If $(|O|,p)=(12,11)$ then $|G_P^{(1)}| < (1,3) \cdot (g-1)$ from Item 1 of Result \ref{list}, so that from Item 2 of Result \ref{list}, $|G_P| < (1,3) \cdot (g-1)(4g+4) < 5g^2$. From the Orbit stabilizer Theorem $|G| = 12|G_P|< 60g^2$, a contradiction.

If $(|O|,p)=(24,23)$ then $|G_P^{(1)}| < (1,1) \cdot (g-1)$ from Item 1 of Result \ref{list}, so that from Item 2 of Result \ref{list},  $|G_P| < (1,1) \cdot (g-1)(4g+4) <5g^2$. From the Orbit stabilizer Theorem $|G| = 24|G_P| < 120g^2$, a contradiction.

If $(|O|,p)=(28,3)$ then $|G_P^{(1)}| \leq 3 (g-1)$ from Item 1 of Result \ref{list}, so that from Item 2 of Result \ref{list},  $|G_P| \leq 3 (g-1)(4g+4)< 12g^2$. From the Orbit stabilizer Theorem $|G| = 28|G_P| < 336g^2$, a contradiction.

Since the stabilizer of a point in $A_n$ with $n \geq 5$ is not solvable we get that Case 1 in Table \ref{table2} cannot occur from Item 4 of Result \ref{list}.

Suppose that $G$ satisfies one of the remaining cases in Table \ref{table2}, that is, Cases 2-7. Let $\cY$ be the quotient curve $\cX/K$. We claim that either $K$ is trivial or $\cX/K$ is rational.

Let $K$ be trivial. Since $|G| \geq 900g^2$, $|G_P|>30(g-1)$ and Sylow $p$-subgroups in $G$ intersect trivially, we can apply Lemma \ref{terribile2} to get a contradiction.

Hence $K$ is not trivial. Suppose first that $g(\cX/K) \geq 2$. The Hurwitz genus formula implies that $2g-2 \geq |K|(2g(\cX/K)-2)$ so that
$$\frac{|G|}{|K|} \geq \frac{|G|(2g(\cX/K)-2)}{2g-2} > \frac{336g^2(g(\cX/K)-1)}{g-1} \geq 336g(\cX/K)^2.$$
We claim that the orbit $\bar O$ of $G/K$ lying below $O$ in $\cX|\cX/K$ is the only short orbit of $G/K$ on $\cX/K$.
Suppose by contradiction that $G/K$ has another short orbit $\bar O_n$. Then $G$ acts on set of points lying above $\bar O_n$, say $O_n$. Since $K$ has no other short orbits other than $O$, we have that $|O_n|=|K||\bar O_n| < |K| |G|/|K|=|G|$ so that also $O_n$ is a short orbit of $G$; a contradiction. This shows that if $g(\cX/K)\geq 2$ then $\cX/K$ has an automorphism group $G/K$ or order at least $336g(\cX/K)^2$ with exactly one short orbit on $\cX/K$. Since $g(\cX/K)<g$ this is not possible for the minimality of $g$. 

Thus, $g(\cX/K) \leq 1$. If $g(\cX/K)=1$ then, denoting with $\bar G_{\bar P}$ the stabilizer of the point $\bar P$ with $P|\bar P$ in $\cX|\cX/K$, $|\bar G_{\bar P}| \leq 12$ from \cite[Theorem 11.94]{HKT}.

We observe that in Cases 2-7 in Table \ref{table2}, $|\bar G_{\bar P}|\geq |O|$ so that also $|O| \leq 12$. We get $|G_P^{(1)}| \leq 3 (g-1)$ from  Item 1 of Result \ref{list}, and from Item 2 of Result \ref{list}, $|G_P| \leq 3(g-1)(4g+4) \leq 12g^2$. From the Orbit stabilizer Theorem $|G| \leq 12|G_P| \leq 144g^2$, a contradiction.
This shows that $\cX/K$ is rational. Since Cases 3-7 cannot give rise to automorphism groups of the rational function field from \cite[Theorem 11.14]{HKT}, only Case 2 can occur and so $\bar G=G/K \cong PSL(2,q),PGL(2,q)$. 

We note that $(q^d-1)/(q-1)-1=(q^d-q)/(q-1)$ cannot be a prime power unless $d=2$ and $q \geq 5$. 

Suppose first that $(|K|,p)=1$. Then Sylow $p$-subgroups of $G$ correspond to Sylow $p$-subgroups of either $PSL(2,q)$ or $PGL(2,q)$ and hence in any case they intersect trivially. Since $|G| >16g^2$ then claim follows from Lemma \ref{terribile2}.

Hence $K$ contains $p$-elements. Let $Q_K$ be a Sylow $p$-subgroup of $K$. Then $Q_K$ is a normal subgroup of $G$ implying that $G$ has normal $p$-subgroups. Denote by $S$ the largest normal $p$-subgroup of $G$ and consider the quotient curve $\cX/S$. From our choice of the subgroup $S$, the quotient group $G_S=G/S$ has no normal $p$-subgroups. We claim that $\cX/S$ is rational. 

Suppose first that $g(\cX/S) \geq 2$.  As before, the Hurwitz genus formula implies that $2g-2 \geq |S|(2g(\cX/S)-2)$ so that
$$\frac{|G|}{|S|} \geq \frac{|G|(2g(\cX/S)-2)}{2g-2} > \frac{336g^2(g(\cX/S)-1)}{g-1} \geq 336g(\cX/S)^2.$$
We claim that the orbit $\bar O$ of $G/S$ lying below $O$ in $\cX|\cX/S$ is the only short orbit of $G/S$ on $\cX/S$.
Suppose by contradiction that $G/S$ has another short orbit $\bar O_n$. Then $G$ acts on set of points lying above $\bar O_n$, say $O_n$. Since $K$ has no other short orbits other than $O$, we have that $|O_n|=|S||\bar O_n| < |S| |G|/|S|=|G|$ so that also $O_n$ is a short orbit of $G$; a contradiction. This shows that if $g(\cX/S)\geq 2$ then $\cX/S$ has an automorphism group $G/S$ of order at least $336g(\cX/S)^2$ with exactly one short orbit on $\cX/S$. Since $g(\cX/S)<g$ this is not possible for the minimality of $g$. 

If $g(\cX/S)=1$ then the group $G_PS/S$ is a subgroup of $Aut(\cX/S)$ fixing at least one point on $\cX/S$ (the one lying below $P$ in $\cX|\cX/S$). Hence $|U| \leq |G_P|/|S_P| =|G_{P}S|/|S|=|G_P|/|G_P \cap S| \leq 12$ from \cite[Theorem 11.94]{HKT}.
Recalling that $|O| \mid (2g-2)$ and that $|G_P^{(1)}| \leq p(g-1)/(p-2)$ from Item 1 of Result \ref{list}, we get $|G| \leq 12(2g-2)p(g-1)/(p-2)<336g^2$; a contradiction.
Hence $\cX/S$ is rational and $G/S$ is a subgroup of $PGL(2,\K)$ with no normal $p$-subgroups. From the classification of finite subgroups $PGL(2,\K)$, see \cite{VM}, $G/S$ is a prime to-p-subgroup which is either cyclic, or dihedral, or isomorphic to one of the the groups $A_4$, $S_4$. 
If $G/S \cong A_4,S_4$ then $|G| \leq 24|S||O| \leq 24(2g-2)p(g-1)/(p-2)<336g^2$ and hence we can discard these cases.
Since $\gamma  \equiv 0 \pmod p$, $S$ has exactly one fixed point (and possibly other non-trivial short orbits) on $\cX$ from Equation \eqref{eq2deuring}. Using the fact that $S$ is normal in $G$ we get that the entire $G$ has a fixed point on $\cX$ which is not possible as $|O|=q+1$.
\end{itemize}
\end{itemize}
\end{itemize}
\end{proof}

Now Items 1-3 in Theorem \ref{maint} are proven.
The next section will be devoted to the proof of Item 4 of Theorem \ref{maint}.

\section{$G$ satisfies Case $4$ of Theorem \ref{Hurwitz}}

In order to prove the main theorem we assume that $\cX$ is an algebraic curve defined over an algebraically closed field of odd characteristic $p$ and genus $g$. Let $G$ be an automorphism group of $\cX$. 
We assume that $|G| \geq 900g^2$ and that $G$ satisfies Case 4 of Theorem \ref{Hurwitz}, so that $G$ has two short orbits, one tame $O_2$ and one non-tame $O_1$. 
Furthermore, choosing a point $P$ in $O_1$, from  Item 4 of Result \ref{list}, we write $G_P = G_P^{(1)} \rtimes U$ with $|U|$ prime-to-$p$ and cyclic. Let $\mathcal{Y}_1$ be the quotient curve $\cX/{G_P^{(1)}}$. To complete the proof of the main theorem we can also assume that $g(\cY_1)=0$ and that $G_{P,R}$ is a $p$-group if it has a non-trivial Sylow $p$-subgroup.
The following three cases are treated separately. 
\begin{enumerate}
\item[(iv.1)] There is a point $R$ distinct from $P$ such that the stabilizer of $R$ in $G_P$ has order $p^t$ with $t \geq 1$. 
\item[(iv.2)] No non-trivial element of $G_P^{(1)}$ fixes a place distinct from $P$, and there is a place $R$ distinct from $P$ but lying in the orbit of $P$ in $G$ such that the stabilizer of $R$ in $G_P$ is trivial. 
\item[(iv.3)] No non-trivial element of $G_P^{(1)}$ fixes a place distinct from $P$, and, for every place $R$ distinct from $P$ but lying in the orbit of $P$ in $G$, the stabilizer of $R$ in $G_P$ is non-trivial. 
\end{enumerate}


\subsection{$G$ satisfies Case (iv.1)}
In this case,  if $R$ is an arbitrary point on $\cX$ distinct from $P$ then the stabilizer of $R$ in $G_P$ is either trivial or a $p$-group of order $p^t$, $t \geq 1$.
Since $\cY_1$ is rational, $\bar U=G_P/G_P^{(1)} \cong U$ is cyclic it fixes two points on $\cY_1$ from  Item 5 of Result \ref{list}, say $\bar P$ (lying below $P$) and $\bar R$. Denote by $O_{\bar R}$ the orbit of $G_P^{(1)}$ lying above $\bar R$. Then $|O_{\bar R}|=p^i$ for some $i$ and $U$ acts on $O_R$. Since $(|U|,p)=1$, $U$ has at least one fixed point on $O_{\bar R}$, a contradiction. This shows that $|U|=1$. 
Assume by contradiction that $\gamma>0$.
From the Hurwitz genus formula applied to $G$ we have
$$2g-2=-2|G|+|O_1|d_P + |O_2|(|G_Q|-1)=-|G|+\frac{|G|}{|G_P|}d_P-\frac{|G|}{|G_Q|},$$
so that 
$$
|G|=\frac{(2g-2)|G^{(1)}_P||G_Q|}{-|G^{(1)}_P||G_Q|+d_P|G_Q|-|G^{(1)}_P|}.
$$

If $|G_Q| \leq 3$ then from Item 1 of Result \ref{list},
$$|G| \leq 6(g-1)|G^{(1)}_P| \leq \frac{6p(g-1)^2}{p-2} \leq 18g^2,$$
a contradiction.
So $|G_Q|>3$. Since $d_P \geq 2|G_P^{(1)}|-2$ and $|G_P^{(1)}| \geq 3$ we get 
$$-|G^{(1)}_P||G_Q|+d_P|G_Q|-|G^{(1)}_P| \geq -|G^{(1)}_P||G_Q|+(2|G_P^{(1)}|-2)|G_Q|-|G^{(1)}_P| $$
$$= |G^{(1)}_P||G_Q|-2|G_Q|-|G^{(1)}_P| \geq |G_Q|-3.$$
Thus, using again Item 1 of Result \ref{list},
$$|G|\leq \frac{(2g-2)|G^{(1)}_P||G_Q|}{|G_Q|-3} < 8g|G_P^{(1)}| \leq \frac{8pg(g-1)}{p-2} \leq 24g^2,$$
a contradiction.

\subsection{$G$ satisfies Case (iv.2)}
In this case the orbit $o(R)$ of $G_P$ containing $R$ is long. Let $o^\prime(R)$ be the orbit of $R$ under $G$. Then from the Orbit Stabilizer Theorem $|o^\prime(R)|\cdot|G_R|=|G|$. Moreover, as $R$ lies in the orbit of $P$ in $G$, also $o(R) \subseteq o^\prime(R)$. Let $Q$ be a place contained in the unique tame short orbit of $G$. From Equation \eqref{eq1} applied to $G$,
$$2(g-1)=-2|G|+\frac{|G|}{|G_Q|}(|G_Q|-1)+\frac{|G|}{|G_P|}d_P,$$
where $d_P$ denotes the ramification at $P$ and $d_Q=e_Q-1=|G_Q|-1$.
Hence
\begin{equation} \label{eqhurwitz}
|G|=2(g-1)\frac{|G_P^{(1)}||U|\cdot |G_Q|}{|G_Q|(d_P-|G_P|)-|G_P|}.
\end{equation}
Combining Equation \eqref{eqhurwitz} and $|o(R)|=|G_P| \leq |o^\prime(R)|$ yields $|G|=|G_P|o^\prime(R)| \geq |G_P|^2$, whence
$$|G_P| \leq \frac{|G|}{|G_P|}=2(g-1)\frac{|G_Q|}{|G_Q|(d_P-|G_P|)-|G_P|} \leq 2(g-1)|G_Q|.$$
Thus,
$$|G_Q|(d_P-|G_P|)-|G_P| \geq d_P|G_Q|-|G_P||G_Q|-2(g-1)|G_Q|,$$
and 
\begin{equation} \label{dis1}
|G_Q|(d_P-|G_P|)-|G_P| \geq |G_Q|(d_P-|G_P|-2(g-1)).
\end{equation}
 From Equation \eqref{eq1} applied to $G_P$,
$$2(g-1)=-2|G_P|+(d_P+|G_P^{(1)}|(|U|-1))=d_P-|G_P|-|G_P^{(1)}|,$$
and hence 
$$d_P-|G_P|-2(g-1)=|G_P^{(1)}|.$$
From Equation \eqref{dis1} and Result \ref{tameorder},
$$|G| \leq 2(g-1)\frac{|G_P^{(1)}||U||G_Q|}{|G_P^{(1)}||G_Q|} \leq 2(g-1)(4g+2)<8g^2.$$
Since $|G| >900g^2$, this case cannot occur.

\subsection{$G$ satisfies Case (iv.4)}

%
%
%

Let as before $P \in O_1$. If $O_1=\{P\}$ then $G=G_P$. In particular $G$ is solvable and the claim follows from Theorem \ref{even}. Hence we can assume that $O \supset \{P\}$. We start with an intermediate lemma.
We assume that $\cX$ is a minimal counterexample with respect to the genus, that is, if $\cY$ is an algebraic curve of genus $\bar g < g$ together with an automorphism group $\bar G$ satisfying Case 4 in Theorem \ref{Hurwitz} then $|\bar G| <900 \bar g^2$. 

\begin{lemma} \label{pr}
If $O_1 \subset \{P\}$ then $O_1$ has size $q+1$ where $q=p^n$, $n \geq 1$ and $G$ acts $2$-transitively on $O_1$. If $K$ denotes the Kernel of the permutation representation of $G$ over $O_1$, one of the following cases occurs.
\begin{itemize}
\item $G/K \cong PGL(2,q),PSL(2,q)$,
\item $G/K \cong PGU(3,q), PSU(3,q)$,
\item $G/K \cong Ree(q)$, when $p=3$, $q=3^{2t+1}$,
\item $G/K$ has a regular normal soluble subgroup and the size of $O_1$ is a prime power.
\end{itemize}
Unless the last case occur, if $|G_P| \geq 30(g-1)$ then $\cX$ has zero $p$-rank.
\end{lemma}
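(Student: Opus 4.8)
The plan is to exploit the $p$-group structure of $S:=G_P^{(1)}$ (of order $p^h$) together with the classification of finite $2$-transitive groups. In the case at hand no non-trivial element of $S$ fixes a place other than $P$, so $S$ is semiregular on $\cX\setminus\{P\}$. Consequently two distinct conjugates of $S$ meet trivially—a common element would be a non-trivial element of $S$ fixing a second place—so the Sylow $p$-subgroups of $G$ form a trivial-intersection family, each with a unique fixed place. Since $S$ is semiregular on $O_1\setminus\{P\}$ we get $|O_1|\equiv 1\pmod p$; hence $S$ is a full Sylow $p$-subgroup of $G$, $N_G(S)=G_P$, and the kernel $K$ of the action of $G$ on $O_1$, fixing $P$ and a further place, is a $p'$-group. (As $\{P\}$ is the only short orbit of $S$ and $g(\cY_1)=0$, the Deuring--Shafarevich formula \eqref{eq2deuring} already forces $\gamma(\cX)=0$; I nonetheless phrase the $p$-rank conclusion through Lemma \ref{terribile2}, as in the statement.)

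The heart of the statement is that $c:=(|O_1|-1)/p^h$, the number of $S$-orbits on $O_1\setminus\{P\}$, equals $1$; this forces $|O_1|=p^h+1$ (the size $q+1$ of the statement, with $q=p^h$), makes $S$ regular on $O_1\setminus\{P\}$, and hence makes $G$ $2$-transitive on $O_1$. Note first that any two-point stabilizer $G_{P,R}$ with $R\in O_1\setminus\{P\}$ contains no $p$-element—such an element would lie in $S$ and fix $R\ne P$—so it is conjugate into the cyclic group $U$ and is cyclic; once $c=1$ it has order $|G_P|/p^h=|U|$, whence $G_{P,R}\cong U$. To obtain $c=1$ I would write the Hurwitz formula \eqref{eq1} for $S\colon\cX\to\cY_1$ (ramified only over the image of $P$) and for $G\colon\cX\to\cX/G$ (branched over the images of $O_1$ and of the tame orbit $O_2$), which combine into the single identity $|O_2|=|O_1|p^h+(|O_1|-1)(2g-2)$; together with $|U|\le 4g+2$ and $|G_Q|\le 4g+2$ (Result \ref{tameorder}), $|G|\ge 900g^2$, and the minimality of $g$ (quotienting by the $p'$-kernel $K$ when $K\ne 1$, exactly as in the proof of Theorem \ref{main3}), this should rule out $c\ge 2$. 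This is the main obstacle: the Hurwitz identity alone does not determine $|O_1|$, so one must also exploit the action of $\bar U=G_P/S$ on the $c$ images of $O_1$ in $\cY_1\cong\mathbb P^1$—where, by Item 5 of Result \ref{list}, the only fixed place besides the image of $P$ is the second fixed place of $\bar U$—to exclude a second orbit.

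With $c=1$ established, $G$ is $2$-transitive on $O_1$ with cyclic two-point stabilizer, and I would invoke the classification. The faithful $2$-transitive group $G/K$ satisfies the hypotheses of Theorem \ref{KOS2t}, so either it has a regular normal subgroup or it is one of $\PSL(2,q),\PGL(2,q),\Sz(q),\PSU(3,q),\PGU(3,q)$ or a group of Ree type. The Suzuki groups occur only in characteristic $2$, whereas $p$ is odd, so $\Sz(q)$ is discarded; the surviving groups give the first three bullets, their degrees ($q+1$ for the linear groups, $q^3+1$ for the unitary and Ree groups) all having the form $p^h+1=|O_1|$ and the Ree case forcing $p=3$. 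In the remaining alternative Theorem \ref{BWrn} shows the regular normal subgroup to be elementary abelian, hence soluble, with $|O_1|$ a prime power; this is the last bullet.

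Finally, assuming we are not in the regular-normal case and $|G_P|\ge 30(g-1)$, I would deduce $\gamma(\cX)=0$ from Lemma \ref{terribile2} with $d=p$, $Q=S$ and $H=G_P$. Condition $(1)$ holds since $|G|\ge 900g^2\ge 16g^2$; condition $(2)$ is the trivial-intersection property of the first paragraph; and for condition $(3)$ recall that $S$ is a Sylow $p$-subgroup of $G$ with $G_P\le N_G(S)$, while $H=G_P=S\rtimes U$ meets the hypotheses of Lemma \ref{terribile}—namely $S$ is a normal Sylow $p$-subgroup of odd order, $U$ is a cyclic complement, $|H|\ge 30(g-1)$, and for $d=p$ the requirement $N_H(U)\cap Q=\{1\}$ is superfluous. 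Lemma \ref{terribile2} then yields $\gamma(\cX)=0$. The regular-normal case is set aside because there $G/K$ is of affine type and is not identified with a rank-one group to which this machinery directly applies; it must be treated separately (although $\gamma(\cX)=0$ still holds by the Deuring--Shafarevich computation noted above).
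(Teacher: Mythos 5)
Your proposal gets the periphery of the lemma right, and in the same way as the paper: the semiregularity of $S=G_P^{(1)}$ outside $P$, the trivial-intersection property of Sylow $p$-subgroups, the fact that the kernel $K$ is a $p'$-group, the cyclicity of two-point stabilizers, the appeal to Theorems \ref{KOS2t} and \ref{BWrn}, and the final application of Lemma \ref{terribile2} with $H=G_P$, $Q=S$, $d=p$. Your Deuring--Shafarevich remark is moreover correct and actually sharper than the lemma's conditional conclusion: in Case (iv.3) the only short orbit of $S$ is $\{P\}$ and $\cY_1$ is rational by the standing assumption of Section 4, so \eqref{eq2deuring} gives $\gamma-1=|S|(0-1)+(|S|-1)=-1$, i.e. $\gamma=0$, with no hypothesis on $|G_P|$.

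There is, however, a genuine gap exactly where you flag ``the main obstacle'': you never prove that the number $c$ of $S$-orbits in $O_1\setminus\{P\}$ equals $1$, i.e. the $2$-transitivity of $G$ on $O_1$ and the equality $|O_1|=q+1$, which is the heart of the lemma (the applicability of Theorem \ref{KOS2t} hinges on it). Your Hurwitz bookkeeping (the identity $|O_2|=|O_1|p^h+(|O_1|-1)(2g-2)$ is correct) cannot settle this, as you concede, and the missing step is not a refinement of those estimates but a different idea, one that crucially uses the second defining property of Case (iv.3) --- that every $R\in O_1\setminus\{P\}$ has \emph{non-trivial} stabilizer in $G_P$ --- a hypothesis your sketch never invokes. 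The paper's argument runs as follows: for each $S$-orbit $o_i\subseteq O_1\setminus\{P\}$ pick $R_i\in o_i$; by that hypothesis $G_{P,R_i}$ contains a non-trivial $p'$-element $\alpha_i$, which lies in some conjugate $U^i$ of $U$ inside $G_P$; the automorphism induced by $\alpha_i$ on $\cY_1\cong\mathbb{P}^1$ is non-trivial, tame and has exactly two fixed points, necessarily the images of $o_0=\{P\}$ and of $o_i$; since $U^i$ is abelian, every element of $U^i$ commutes with $\alpha_i$ and therefore permutes this two-point fixed set, and since it fixes the image of $o_0$ it must also fix the image of $o_i$. Hence all of $G_P=S\rtimes U^i$ stabilizes each $o_i$ setwise. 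If $c\geq 2$, the non-trivial prime-to-$p$ cyclic group $G_P/S\cong U$ would then fix at least three points of $\mathbb{P}^1$, contradicting Item 5 of Result \ref{list}; so $c=1$, $|O_1|=p^h+1$, and $G$ is $2$-transitive on $O_1$. Without this (or an equivalent) argument, the main assertion of the lemma remains unproved in your proposal.
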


\begin{proof}
Let $o_0=\{P\}, o_1,\ldots,o_k$ denote the orbits of $G_P^{(1)}$ contained in $O_1$, so that $O_1=\bigcup_{i=0}^{k} o_i$. To prove that $G$ acts $2$-transitively on $O_1$ we show that $k=1$. 

For any $i=1,\ldots,k$ take $R_i \in o_i$. Since we are dealing with Case (iv.4), $R_i$ is fixed by an element $\alpha_i \in G_P$ of prime order $m \ne p$ dividing $|U|$. By Sylow's Theorem there exist a subgroup $U^i$ conjugated to $U$ in $G_P$ containing $\alpha_i$ and $\alpha_i$ clearly preserves $o_i$. As previously noted, since $\cY_1=\cX/G_P^{(1)}$ is rational, $\alpha_i$ fixes at most two $G_P^{(1)}$-orbits and hence $o_0$ and $o_i$ are the only fixed orbits of $\alpha_i$. Since $U^i$ is abelian and it fixes $o_0$, the orbits $o_0$ and $o_i$ are also the only $G_P^{(1)}$-orbits fixed by $U^i$. Since we can write $G_P=G_P^{(1)} \rtimes U^i$ we get that the whole $G_P$ fixes $o_i$ for all $1\leq i \leq k$. Thus, either $k=1$ or $G_P$ fixes at least $3$ $G_P^{(1)}$-orbits. The latter case cannot occur from  Item 5 of Result \ref{list} applied to $\cY_1$ as automorphisms of a curve of genus zero have at most $2$ fixed points.

This shows that $k=1$ so that $G$ acts $2$-transitively on $O_1$. Also $|O_1|=q+1$ with $q=|G_P^{(1)}|=p^n$, $n \geq 1$ as we are dealing with Case (iv.4). Let $K$ denote the kernel of the action of $G$ on $O_1$ and let $\bar G=G/K$ be the corresponding permutation group. Since the stabilizer of $2$ points in $G$ (and hence $\bar G$) is cyclic and $p$ is odd we get from Theorem \ref{KOS2t} that $\bar G$ is isomorphic to one of the groups listed.
In the last case $O_1$ is a prime power and the regular normal subgroup an elementary abelian group from Theorem \ref{BWrn}.

From $(|K|,p)=1$, we have that Sylow $p$-subgroups of $G$ corresponds to Sylow $p$-subgroups of $\bar G$. Since in all the cases but the last one Sylow $p$-subgroups of $\bar G$ intersect trivially the same holds for $G$. If $|G_P| \geq 30(g-1)$ the claim follows from Lemma \ref{terribile2}.
\end{proof}

Assume the one of the first $3$ cases listed in Lemma \ref{pr} occurs with $|G_P| <30(g-1)$. First of all we note that $K$ is not trivial. Suppose indeed by contradiction that $K$ is trivial so that $G\cong \bar G$.
\begin{itemize}
\item $G \cong PSL(2,q),PGL(2,q)$. Here $|G_P|=q(q-1)/2$ or $|G_P|=q(q-1)$ and in any case $|G_P|^2>|G|=(q+1)|G_P| \geq 900g^2$. Hence $|G_P|>30(g-1)$, a contradiction.
\item $G \cong PSU(3,q),PGU(2,q)$. In this case $|G_P|=q^3(q^2-1)/3$ or $|G_P|=q^3(q^2-1)$ and in any case $|G_P|^2>|G|=(q^3+1)|G_P|\geq 900g^2$. Hence $|G_P|>30(g-1)$, a contradiction.
\item $G \cong Ree(q)$. Now, $|G_P|=q^3(q-1)$ and $|G_P|^2>|G|=(q^3+1)|G_P|\geq 900g^2$. Hence $|G_P|>30(g-1)$, a contradiction.
\end{itemize}

Before analyzing the case in which $K$ is not trivial, we prove a trivial intersection condition for the stabilizers of points in distinct $G$-orbits.

\begin{lemma} \label{lem2}
If $Q \in O_2$ then $G_P \cap G_Q$ is trivial.
\end{lemma}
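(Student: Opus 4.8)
The plan is to show that the stabilizers $G_P$ and $G_Q$ with $P \in O_1$ (the non-tame orbit) and $Q \in O_2$ (the tame orbit) can share no non-trivial common element, by exploiting the incompatibility between the non-tame structure of $G_P$ and the tame structure of $G_Q$. First I would recall the structure of both stabilizers: by Item 4 of Result \ref{list} we have $G_P = G_P^{(1)} \rtimes U$ with $G_P^{(1)}$ the Sylow $p$-subgroup of $G_P$ and $U$ a cyclic $p'$-group, whereas $Q$ lies in the tame orbit $O_2$ so $G_Q$ is a cyclic $p'$-group (a prime-to-$p$ cyclic group fixing $Q$, as tame point stabilizers are prime-to-$p$ and cyclic by the same result). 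In particular any $p$-element of $G_P$ cannot lie in $G_Q$, so $G_P \cap G_Q$ is automatically a $p'$-group.

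The key step is therefore to rule out common non-trivial $p'$-elements. Suppose $1 \ne \alpha \in G_P \cap G_Q$; then $\alpha$ is an automorphism of $\cX$ fixing both $P$ and $Q$, so $\alpha$ has at least two fixed points. Since $\alpha$ is a tame ($p'$) automorphism, I would appeal to the fact that under the hypothesis $\gamma(\cX)=0$ (or, more precisely, to the classification forcing $O_1$ to be the non-tame orbit on which $G$ acts $2$-transitively via Lemma \ref{pr}) the fixed-point behaviour of $\alpha$ is tightly constrained. The cleanest route is to use the $2$-transitive action on $O_1=\{P\}\cup o_1$ of size $q+1$: the element $\alpha$, lying in $G_P$, fixes $P$, and I would argue it cannot simultaneously centralize the action in a way consistent with also fixing the tame point $Q$. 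Concretely, $\alpha \in U \le G_P$ acts on $O_1$ fixing $P$; since the two-point stabilizer is cyclic and $\alpha$ also fixes $Q \notin O_1$, one counts the fixed points of $\alpha$ on $\cX$ and derives a contradiction with the permitted orbit structure of $G$ (only the two short orbits $O_1, O_2$), because a tame element fixing $Q\in O_2$ forces $O_2$ to contain additional ramification incompatible with $|O_2|$ being tame of the required size.

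I would then combine this with the count of fixed points: a non-trivial tame automorphism fixing $Q \in O_2$ must have its full set of fixed points lying in short orbits of $G$, and since $G$ has exactly the two short orbits $O_1$ and $O_2$ (Case 4 of Theorem \ref{Hurwitz}), every fixed point of $\alpha$ lies in $O_1 \cup O_2$. But $\alpha \in G_P$ together with the rigidity of the genus-zero quotient $\cY_1 = \cX/G_P^{(1)}$ (automorphisms of a rational curve have at most two fixed points, Item 5 of Result \ref{list}) pins down that $\alpha$ can fix at most the two orbits $o_0=\{P\}$ and one further $G_P^{(1)}$-orbit inside $O_1$; so $\alpha$ cannot in addition fix a point of the disjoint tame orbit $O_2$, giving the contradiction.

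The main obstacle I anticipate is handling the interaction precisely: ensuring that a single shared $p'$-element really does force an extra fixed orbit beyond what the genus-zero quotient $\cY_1$ permits, rather than merely being consistent with the $2$-transitive picture. The delicate point is that $\alpha \in G_P \cap G_Q$ fixes $Q$, which lies \emph{outside} $O_1$, so I must carefully track how $\alpha$ acts on the $G_P^{(1)}$-orbits and on the tame orbit simultaneously; the cleanest resolution is likely to lift the two-fixed-point bound from $\cY_1$ to $\cX$ and observe that the three forced fixed loci ($P$, the point below $Q$, and the second fixed orbit from $2$-transitivity) exceed the maximum of two fixed orbits allowed for a prime-to-$p$ automorphism of the rational curve $\cY_1$, which is the contradiction that yields triviality of $G_P \cap G_Q$.
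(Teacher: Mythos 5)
Your proposal is correct and follows essentially the same route as the paper: take a non-trivial element $\alpha \in G_P \cap G_Q$, note it has order prime to $p$, use the structure from Lemma \ref{pr} (namely $O_1 = \{P\} \cup o_1$ with $o_1$ a single $G_P^{(1)}$-orbit) to force a third fixed locus for $\alpha$ besides $P$ and $Q$, and then contradict the two-fixed-point bound of Item 5 of Result \ref{list} for the induced tame cyclic automorphism of the rational quotient $\cY_1 = \cX/G_P^{(1)}$. The only cosmetic difference is that the paper produces an actual second fixed point $R \in O_1 \setminus \{P\}$ of $\alpha$, whereas your final formulation only uses that $\alpha$ stabilizes $o_1$ setwise; either way its image on $\cY_1$ supplies the forbidden third fixed point, so the two arguments coincide in substance.
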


\begin{proof}
Let $\alpha \in G_P \cap G_Q$ non-trivial. Then the order of $\alpha$ is not divisible by $p$. Hence $\alpha \in U$ up to conjugation. Since $|O_1|=q+1$ from Lemma \ref{pr}, $\alpha$ fixes at least another point $R \in O_1 \setminus \{P\}$. Since $P$, $Q$ and $R$ are in three distinct $G_P^{(1)}$-orbit the automorphism $\bar \alpha$ induced by $\alpha$ on $\cY_1$ has at least three fixed point on $\cY_1$. Since the order of $\alpha$ is the same as the order of $\bar \alpha$, from Item 5 of Result \ref{list}  we get that $\alpha$ has order 1, completing the proof.
\end{proof}

This proves that $K$ is not trivial and from Lemma \ref{lem2} the only short orbits of $K$ are exactly the points in $O_1$.

 We claim that $g(\cX/K)=0$. Assume first that $g(\cX/K) \geq 2$. Then arguing as for the previous cases using the Hurwitz genus formula, $|G|/|K| \geq 900g(\cX/K)^2$. Also the set of points $\bar O_1$ and $\bar O_2$ lying below $O_1$ and $O_2$ in $\cX/\cX/K$ are respectively a short and a long orbit of $G/K$. If $G/K$ has not exactly one another tame short orbit on $\cX/K$ then $|G|/|K| \leq 336g(\cX/K)^2$ from Section 3 and Theorem \ref{maint} Item 1, a contradiction. Since now $G_/K$ has exactly two short orbits (one tame and one non-tame) and $g(\cX/2)<g$ from the minimality of $g$ we get a contradiction.

Suppose that $g(\cX/K)=1$. Then $|U| \leq 12$ from \cite[Theorem 11.94]{HKT}. Note that if $\gamma>0$ then from Item 1 of Result \ref{list}, $|O|=1+|G_P^{(1)}| <2|G_P^{(1)}| \leq 2p(g-1)/(p-2) <4g$. Hence
$$|G| < 4g|G_P^{(1)}||U| \leq 192g^2,$$
a contradiction.
Thus $\cX/K$ is rational and hence $G/K \cong PSL(2,q), PGL(2,q)$ since the other groups do not occur as subgroups of automorphisms of curves of genus zero. From the Hurwitz genus formula
$$2g-2=-2|K|+(|K|-1)(q+1),$$
so that $g=(q-1)(|K|-1)/2$. In particular $|O||K|=(q+1)|K| \leq 10g$. Hence
$$900g^2 \leq |G|=|O||G_P| < 10g|G_P|.$$
So, $|G_P| >30(g-1)$. The claim now follows from Lemma \ref{terribile2}.

We are left with the last case in Lemma \ref{pr}, that is, a minimal normal subgroup of $G/K$ is soluble and the size of $O_1$ is a prime power. Since $q$ is odd, we get $q+1=2^t$ for some $t\geq 1$. 
From Mihăilescu Theorem either $q=p$ is a Marsenne prime or $q=8$. Since $q$ is odd, $q=p$ is a Marsenne prime and $q=p=|G_P^{(1)}|$.
Assume that $\gamma \ne 0$.
If $|G_P|<30(g-1)$ we get from Item 1 of Result \ref{list} that $|G|=|O||G_P|<30(g-1)(|G_P^{(1)}|+1)<900g^2$, a contradiction.
Hence $|G_P| \geq 30(g-1)$ and Lemma \ref{terribile2} gives the desired contradiction. Indeed Sylow $p$-subgroup of $G$ intersect trivially as they all fix exactly one point on $O_1$.

This proves Item 4 in Theorem \ref{maint} so that Theorem \ref{maint} is completely proven.
We conclude this section with the following open problem.
\begin{openproblem}
Is the condition $|G| \geq 900g^2$ sufficient to imply $\gamma(\cX)=0$ also when $g(\cX/G_P^{(1)}) \geq 1$ for $P \in O_1$, or $|G_{P,R}|=p^t h$ with $R \in O_1 \setminus \{P\}$, $t \geq 1$ and $h>1$?
\end{openproblem}

\section*{Acknowledgments}
The author would like to thank Prof. Massimo Giulietti and Dr. Pietro Speziali for the helpful discussions and comments on the topic.

\end{document}